\documentclass{article}

\usepackage{amsmath,amssymb,amsthm,mathrsfs}




\newcommand{\BC}{{\mathbb C}}\newcommand{\BD}{{\mathbb D}}

\newcommand{\BN}{{\mathbb N}}

\newcommand{\BT}{{\mathbb T}}


\newcommand{\cD}{{\mathcal D}}
\newcommand{\cF}{{\mathcal F}}

\newcommand{\cM}{{\mathcal M}}

\newcommand{\cX}{{\mathcal X}}
\newcommand{\cY}{{\mathcal Y}}





\newcommand{\wtilG}{\widetilde{G}}


\newcommand{\ga}{\gamma}\newcommand{\Ga}{\Gamma}
\newcommand{\De}{\Delta}

\newcommand{\Th}{\Theta}

\newcommand{\la}{\lambda}\newcommand{\La}{\Lambda}

\newcommand{\om}{\omega}

\newcommand{\rank}{\textup{rank\,}}

\newcommand{\im}{\textup{Im\,}}

\newcommand{\kr}{\textup{Ker\,}}

\newcommand{\spec}{r_\textup{spec}}
\newcommand{\codim}{\textup{codim}\,}

\newcommand{\ov}[1]{{\overline{#1}}}

\newcommand{\tu}[1]{\textup{#1}}


\newcommand{\ands}{\quad\mbox{and}\quad}

\newtheorem{theorem}{Theorem}[section]
\newtheorem{corollary}[theorem]{Corollary}
\newtheorem{lemma}[theorem]{Lemma}
\newtheorem{proposition}[theorem]{Proposition}


\newcommand{\ts}{{\times}}
\newcommand{\iy}{{\infty}}
\newcommand{\va}{{\varphi}}

\begin{document}

\title{Optimal solutions to matrix-valued Nehari problems and related limit theorems}

\author{A.E. Frazho, S. ter Horst and M.A. Kaashoek}

\date{}

\maketitle

\begin{abstract}
In a 1990 paper Helton and Young showed that under certain conditions the optimal solution of the Nehari problem corresponding to a finite rank Hankel operator  with scalar entries  can be efficiently approximated by certain functions defined in terms of finite dimensional restrictions of the  Hankel operator. In this paper it is shown  that these approximants appear as optimal solutions to restricted Nehari problems. The latter problems  can be solved using relaxed commutant lifting theory.  This observation is used to extent the Helton and Young approximation result to a matrix-valued setting.  As in the Helton and Young paper the rate of convergence  depends on the choice of the initial space in the approximation scheme.
\end{abstract}




\section{Introduction}\label{S:intro}

Since the 1980s, the Nehari problem played an important role in system and control theory, in particular, in the $H^\infty$-control solutions to sensitivity minimization and robust stabilization, cf., \cite{Francis}.  {In system and control theory the Nehari problem} appears mostly as a distance problem: Given $G$ in $L^\infty$, determine the distance of $G$ to $H^\infty$, that is, find the quantity
$d:=\inf\{\|G-F\|_\iy\mid  F\in H^\iy\}$ and, if possible, find an $F\in H^\iy$ for which this infimum is attained. Here all functions are complex-valued functions on the unit circle $\BT$. It is well-known that the solution to this problem is determined by the Hankel operator $H$ which maps $H^2$ into $K^2=L^2\ominus H^2$ according to the  rule $H f= P_- (Gf)$, where $P_-$ is the orthogonal projection of  $L^2$ onto $K^2$. Note that $H$ is uniquely determined by the Fourier coefficients of $G$ with negative index. Its operator norm determines the minimal distance.  In fact, $d=\|H\|$ and the infimum is attained. Furthermore, if $H$ has a maximizing vector $\va$, that is, $\va$ is a non-zero function in $H^2$ such that $\|H\va\|=\|H\|\,\|\va\|$, then the  {AAK theory \cite{AAK68, AAK71} (see also \cite{DS67})} tells us that the  best approximation $\widehat{G}$ of $G$ in $H^\iy$ is unique and is given by
\begin{equation}\label{bestapp1}
\widehat{G}(e^{it})=G(e^{it})-\frac{(H \va)(e^{it})}{\va (e^{it})}\quad\mbox{a.e.}
\end{equation}

{By now the connection between the Nehari problem and Hankel operators is well established, also for  matrix-valued and operator-valued functions, and has been put into the larger setting of metric constrained interpolation problems, see, for example, the books \cite[Chapter IX]{FF90}, \cite[Chapter XXXV]{GGK93},  \cite[Chapter I]{FFGK98}, \cite[Chapter 5]{Peller03} and \cite[Chapter 7]{AD08}, and the references therein.}

{The present paper is inspired by Helton-Young \cite{HY90}. Note that}  formula \eqref{bestapp1} and the maximizing vector $\va$, may be hard to compute, especially if $H$ has large or infinite rank. Therefore, to approximate the optimal solution \eqref{bestapp1}, Helton-Young \cite{HY90} replaces $H$ by the restriction $\dot{H}=H|_{H^2\ominus z^n qH^2}$ to arrive at
\begin{equation}\label{bestapp2}
\widetilde{G}(e^{it})=G(e^{it})-\frac{(\dot{H}\widetilde{\va})(e^{it})}{\widetilde{\va} (e^{it})},\quad\mbox{a.e.}
\end{equation}
as an approximant of $\widehat G$. Here $n$ is a positive integer, $q$ is a polynomial and $\widetilde{\va}$ is a maximizing vector of $\dot{H}$. Note that a maximizing vector $\widetilde{\va}$ of $\dot{H}$ always exists, since $\rank \dot{H}\leq n+\deg q$, irrespectively of the rank of $H$ being finite, or not.

In \cite{HY90} it is shown that $\widetilde{G}$ is a computationally efficient approximation of the optimal solution $\widehat{G}$ when the zeros of the polynomial $q$ are close to the poles of $G$ in the open unit disk $\BD$ that are close to the unit circle $\BT$. To be more precise, it is shown that if $G$ is rational, i.e., $\rank H<\infty$, and $\|H\|$ is a simple singular value of $H$, then $\|\widehat{G}-\wtilG\|_\infty$ converges to 0 as $n\to\infty$. This convergence is proportional to $r^n$ if the poles of $G$ in $\BD$ are within the disc $\BD_r=\{z\in\BC\mid |z|<r\}$, and the rate of convergence can be improved by an appropriate choice of the polynomial~$q$.

It is well-known that the Nehari problem fits in the commutant lifting framework, and that the solution formula \eqref{bestapp1} follows as a corollary of the commutant lifting theorem. {We shall see that the same holds true for formula \eqref{bestapp2} provided one uses   the relaxed commutant lifting framework of \cite{FFK02}; cf., Corollary 2.5 in \cite{FFK02}.}

To make the connection with relaxed commutant lifting more precise, define $R_n$ to be the orthogonal projection of $H^2$ onto  $H^2 \ominus z^{n-1}qH^2$, and  put $Q_n=SR_n$, where $S$ is the forward shift on $H^2$. Then the operators $R_n$ and $Q_n$ both map $H^2$ into $H^2\ominus z^nqH^2$, and the restriction operator $H_{n}:=H|_{H^2\ominus z^n qH^2}$ satisfies the intertwining relation $V_- H_{n} R_n=H_{n} Q_n$. Here $V_-$ is the compression of the forward shift $V$ on $L^2$ to $K^2$. Given this intertwining relation,  the relaxed commutant lifting theorem \cite[Theorem 1.1]{FFK02} tells us that there exists an operator $B_n$
from $H^2\ominus z^nqH^2$ into $L^2$ such that
\begin{equation}\label{rcl2}
P_-B_n= H_{n}, \quad  VB_nR_n=BQ_n, \quad \|B_n\|=\|H_{n}\|.
\end{equation}
The second identity in \eqref{rcl2} implies (see Lemma \ref{L:symbol} below) that
for a solution $B_n$ to \eqref{rcl2} there exists a unique function $\Phi_n\in L^2$ such that the action of $B_n$ is given by
\begin{equation}\label{symbol}
(B_nh)(e^{it})=\Phi_n(e^{it})h(e^{it})\hspace{.2cm} a.e.\quad (h\in H^2\ominus z^nqH^2).
\end{equation}
Furthermore,  {since $H_n$ has finite rank}, there exists only one solution $B_n$ to \eqref{rcl2} (see Proposition \ref{P:VectorSol} below), and if $\psi_n=\widetilde{\va}$ is a maximizing vector of  $H_{n}$, then this unique solution is given by \eqref{symbol} with $\Phi_n$ equal to
\begin{equation}\label{uniqueSol}
\Phi_n(e^{it})=\frac{(H_{n}\psi_n)(e^{it})}{\psi_n(e^{it})}
=\frac{(\dot{H}\widetilde{\va})(e^{it})}{\widetilde{\va}(e^{it})},\quad\mbox{a.e.}.
\end{equation}
Thus $G-\widetilde{G}$ appears as an optimal solution to a relaxed commutant lifting problem.

{This observation together with  the relaxed commutant lifting theory developed in the last decade, enabled us to extent the Helton-Young convergence result for optimal solutions  {in} \cite{HY90} to a matrix-valued setting, that is, to derive} an analogous convergence result for optimal solutions to matrix-valued Nehari problems; see Theorem \ref{T:main2} below. A complication in this endeavor is that formula \eqref{bestapp1} generalizes to the vector-valued case, but not to the matrix-valued case. Furthermore, in the matrix-valued case there is in general no unique solution. We overcome the latter complication by only considering {the central solutions which} satisfy an additional maximum entropy-like condition.  {On the way we also derive explicit state space formulas for  optimal solutions to the classical and restricted Nehari problem assuming that the Hankel operator is of finite rank and satisfies an appropriate condition on  the space spanned by its maximizing vectors. These state space formulas play an essential role in the proof of the convergence theorem.}

This paper consists of 6 sections including  the present introduction. In Section~\ref{secRestrN}, which  has a preliminary character, we introduce a restricted version of the  matrix-valued Nehari problem,  and  {use  relaxed commutant lifting theory to show that it always has an optimal solution. Furthermore, again using   relaxed commutant lifting theory, }we derive a formula for the (unique) central optimal solution.  In Section \ref{S:MainRes} we state our main convergence result. In Section \ref{S:centropt2} the formula for the (unique) central optimal solution derived in Section \ref{secRestrN} is developed further, and {in Section \ref{S:NehariCase} this formula is specified  for the classical Nehari problem. Using these formulas Section \ref{secprfmthm} presents the proof of the main convergence theorem.}

\medskip\noindent \textbf{Notation and terminology.} We conclude this introduction with a few words about notation and terminology.
 Given $p,q$ in $\BN$, the set of positive integers, we write $L^2_{q\ts p}$ for the space of all  $q\ts p$-matrices with entries in $L^2$, the Lebesgue space of square integrable functions on the unit circle. Analogously, we write $H^2_{q\ts p}$ for the space of all  $q\ts p$-matrices with entries in the classical Hardy space $H^2$, and $K^2_{q\ts p}$ stands for the space of all  $q\ts p$-matrices with entries in the  space $K^2=L^2\ominus H^2$, the orthogonal compliment of  $H^2$ in $L^2$. Note that each $F\in L^2_{q\ts p}$ can be written uniquely as a sum $F=F_+ +F_-$ with $F_+\in H^2_{q\ts p}$ and  $F_-\in K^2_{q\ts p}$. We shall refer to $F_+$ as the \emph{analytic part} of   $F$ and to $F_-$ as its \emph{co-analytic part}. When {there is only one column}  we simply write $L^2_p$, $H^2_p$ and $K^2_p$ instead of  $L^2_{p\ts 1}$, $H^2_{p\ts 1}$ and $K^2_{p\ts 1}$. Note that $L^2_p$, $H^2_p$ and $K^2_p$ are Hilbert spaces and $K^2_p=L^2_p\ominus H^2_p$.  Finally, $L^\iy_{q\ts p}$ stands for the space of all  $q\ts p$-matrices whose  entries are essentially bounded on the unit circle with respect to the Lebesque measure, and $H^\iy_{q\ts p}$ stands for the space of all  $q\ts p$-matrices whose  entries are analytic and uniformly bounded on the open unit disc $\BD$. Note that each $F\in L^\iy_{q\ts p}$ belongs to $L^2_{q\ts p}$  and hence  the analytic part $F_+$ and the co-analytic part $F_-$ of $F$ are well defined. These functions belong to $L^2_{q\ts p}$ and it may happen that neither $F_+$ nor $F_-$ belong to $L^\iy_{q\ts p}$.  In the sequel we shall  need the following embedding and projection operators:
\begin{align}
&E:\BC^p\to H_p^2, \quad  Eu(\la)=u\quad (z\in \BD);\label{defE}\\
&\Pi : K_q^2\to  \BC^q, \quad \Pi f= \frac{1}{2\pi}\int_0^ {2\pi}e^{-it}f(e^{it})\,dt. \label{defPi}
\end{align}
Throughout  $G\in L^\infty_{q\ts p}$, and $H: H_p^2\to K_q^2$ is the Hankel operator defined by the co-analytic part of $G$, that is, $H f=P_-(Gf)$ for each $f\in H_p^2$.  Here $P_-$ is the orthogonal projection of $L_q^2$ onto $ K_q^2$.   Note that $V_- H=HS$, where  $S$ is the forward shift on  $H_p^2$ and $V_-$ is the compression to $K_q^2$ of the forward shift $V$ on $L_q^2$.

Finally, we  associate with the Hankel operator $H$  two auxiliary operators involving the closure of its range, i.e., the space $\cX= \overline{\im H}$, as follows:
\begin{align}
& Z:\cX\to\cX, \qquad Z=V_-|_\cX,\label{defZ}\\
& W:H_p^2 \to\cX, \qquad Wf=Hf\quad (f\in H_p^2). \label{defW}
\end{align}
Note that  $\cX:=\overline{\im H}$ is a $V_-$-invariant subspace of $K_q^2$.  Hence $Z$ is a well-defined  contraction. Furthermore, if $\rank H$ is finite, then the spectral radius  $r_{\textup{spec}}(Z)$  is strictly less than one and the co-analytic part $G_-$ of $G$  is the rational matrix function given by
\[
G_-(\la)= (\Pi|_\cX) (\la I- Z)^{-1}WE.
\]
In system theory the  right hand side of the above identity is known as the  restricted
backward shift realization of $G_-$; see, for example,  \cite[Section 7.1]{CF03}.  This realization is minimal, and hence the eigenvalues of  $Z$ coincide with the poles of  $G_-$ in $\BD$. {In particular, $\spec(Z)<1$.} Since $V_-H=HS$, we have $ZW=WS$. Furthermore, $\kr H^*=K_q^2\ominus \cX$.

\setcounter{equation}{0}
\section{Restricted Nehari problems and relaxed commutant lifting} \label{secRestrN}
In this section we introduce a restricted version of the Nehari problem, and we prove that it is equivalent to a certain  relaxed commutant lifting problem. Throughout $\cM$ is a subspace of $H^2_p$ such that
\begin{equation}\label{cM1}
 S^*\cM \subset \cM, \qquad \kr S^*\subset \cM.
\end{equation}
With $\cM$ we associate  {operators $R_\cM$ and $Q_\cM$ acting on $H^2_p$, both}  mapping $H^2_p$ into $\cM$. By definition $R_\cM$ is the orthogonal projection of $H^2_p$ onto $S^*\cM$ and $Q_\cM=SR_\cM$.

We begin by introducing the notion of an $\cM$-norm. We say that $\Phi\in L^2_{q\ts p}$ has a \emph{finite $\cM$-norm} if $\Phi h\in L^2_{q}$ for each $ h\in \cM$  and the map $h\mapsto \Phi h$ is a bounded linear operator, and in that case we define
\[
\|\Phi\|_\cM=\sup \{\|\Phi h\|_{L^2_{q}} \mid h\in \cM, \quad \|h\|_{H^2_p}\leq 1\}.
\]
If  $\cM$ is finite dimensional, then each $\Phi\in L^2_{q\ts p}$ has a finite $\cM$-norm. Furthermore,  $\Phi\in L^\iy_{q\ts p}$  has a finite $\cM$-norm for every choice of $\cM$, and in this case $\|\Phi\|_\cM\leq \|\Phi\|_\iy$, with equality if $\cM=H_p^2$.  Note that  $\Phi\in L^2_{q\ts p}$ has a finite $\cM$-norm and $G\in L^\iy_{q\ts p}$ imply $G-\Phi$ has a finite $\cM$-norm.

We are now ready to formulate  the $\cM$-restricted Nehari problem.  Given $G\in L^\iy_{q\ts p}$ and a subspace $\cM$ of $H^2_p$, we define the \emph{optimal $\cM$-restricted Nehari problem} to be the problem of determining the quantity
\begin{equation}\label{inf}
d_\cM:= \inf \{\|G-F\|_\cM \mid  \mbox{$ F\in H^2_{q\ts p}$ and $F$ has a finite $\cM$-norm}\},
\end{equation}
and, if possible, to find a function $F\in H^2_{q\ts p}$  of finite $\cM$-norm at which the infimum is attained. In this case, a function $F$ attaining the infimum is called  an \emph{optimal solution}. The suboptimal variant of the problem allows  the norm $\|G-F\|_\cM $ to be  larger than the infimum. When  $\cM= H^2_p$, the problem  coincides with the classical matrix-valued Nehari problem in $L^\iy_{q\ts p}$.  {In \cite{tH07, tH09} the} case where $\cM=H^2_p \ominus S^k H^2_p$, with $k\in\BN$, was considered.

\begin{proposition}\label{propRestrN} Let $G\in L^\iy_{q\ts p}$, and let $\cM$ be a subspace of $H^2_p$  satisfying the conditions in \eqref{cM1}. Then the $\cM$-restricted Nehari problem has an optimal solution and the quantity $d_\cM$ in
\eqref{inf} is equal to $\ga_\cM:=\|H|_\cM\|$, where $H: H_p^2\to K_q^2$ is the Hankel operator defined by the co-analytic part of $G$.
 \end{proposition}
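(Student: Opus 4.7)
The plan is to recognize the $\cM$-restricted Nehari problem as a relaxed commutant lifting problem, apply Theorem~1.1 of \cite{FFK02} to produce an optimal multiplier, and then translate the resulting operator back into a function $F\in H^2_{q\ts p}$.  First I would verify the two algebraic prerequisites for the lifting.  Because $R_\cM$ is an orthogonal projection and $S^*S=I$ on $H^2_p$, one has $R_\cM^*R_\cM=R_\cM=Q_\cM^*Q_\cM$, so $R_\cM$ and $Q_\cM$ satisfy the norm compatibility required.  Next, $Q_\cM$ sends $H^2_p$ into $\cM$: for $f\in H^2_p$ the image $R_\cM f$ lies in $S^*\cM$, whence $Q_\cM f=SR_\cM f\in SS^*\cM$, and since $SS^*=I-P_{\kr S^*}$ together with $\kr S^*\subset\cM$ gives $SS^*\cM\subset\cM$.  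The identity $V_-H=HS$ then yields the intertwining $V_-(H|_\cM)R_\cM=(H|_\cM)Q_\cM$.

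With this setup the relaxed commutant lifting theorem produces an operator $B_\cM\colon\cM\to L^2_q$ satisfying $P_-B_\cM=H|_\cM$, $VB_\cM R_\cM=B_\cM Q_\cM$, and $\|B_\cM\|=\|H|_\cM\|=\ga_\cM$.  Lemma~\ref{L:symbol} then provides a function $\Psi\in L^2_{q\ts p}$ of finite $\cM$-norm for which $(B_\cM h)(e^{it})=\Psi(e^{it})h(e^{it})$ a.e.\ for every $h\in\cM$.  I would propose $F:=G-\Psi$ as the optimal candidate.  To see $F\in H^2_{q\ts p}$: for any $h\in\cM$ the product $(G-\Psi)h$ lies in $L^2_q$ with co-analytic part $P_-(Gh)-P_-(\Psi h)=Hh-(H|_\cM)h=0$, so $(G-\Psi)h\in H^2_q$; letting $h$ range over the constants $\BC^p=\kr S^*\subset\cM$ shows that every column of $G-\Psi$ is in $H^2_q$.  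Moreover $\|G-F\|_\cM=\|\Psi\|_\cM=\|B_\cM\|=\ga_\cM$, which yields $d_\cM\le\ga_\cM$ and exhibits an $F$ attaining the infimum.

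For the reverse inequality, I would take an arbitrary competitor $F\in H^2_{q\ts p}$ of finite $\cM$-norm and any unit $h\in\cM$, and argue first that $Fh\in H^2_q$.  This is the one genuinely analytic point: $F$ need not be essentially bounded, so one approximates $F$ in $H^2_{q\ts p}$ by analytic matrix polynomials (for instance, the Cesaro means of its Fourier series), uses the boundedness of the multiplication map $h\mapsto Fh$ on $\cM$ supplied by the finite $\cM$-norm hypothesis, and passes to the limit to conclude that the co-analytic Fourier coefficients of $Fh$ vanish.  Consequently $P_-((G-F)h)=Hh=(H|_\cM)h$, so $\|(G-F)h\|_{L^2_q}\ge\|(H|_\cM)h\|$.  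Taking the supremum over unit $h\in\cM$ gives $\|G-F\|_\cM\ge\ga_\cM$, so $d_\cM=\ga_\cM$ with the infimum attained at $F=G-\Psi$.

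The main obstacles I anticipate are essentially two bookkeeping points.  First, the verification $Q_\cM H^2_p\subset\cM$ genuinely requires both conditions in \eqref{cM1}: mere $S^*$-invariance of $\cM$ does not suffice, and one really needs $\kr S^*\subset\cM$ to absorb the defect $I-SS^*$.  Second, the step $Fh\in H^2_q$ for a possibly unbounded $F\in H^2_{q\ts p}$ needs the approximation argument sketched above; beyond this, the proof is a direct translation between the lifted operator $B_\cM$ and its multiplier~$\Psi$.
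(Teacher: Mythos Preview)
Your proposal is correct and follows essentially the same route as the paper: both cast the problem as a relaxed commutant lifting problem, invoke \cite[Theorem 1.1]{FFK02} together with Lemma~\ref{L:symbol} to produce an optimal $\Phi$ with $\Phi_-=G_-$, and handle the reverse inequality by observing that any competitor $F$ defines a multiplication operator on $\cM$ whose compression to $K^2_q$ recovers $H|_\cM$. You are in fact more careful than the paper about the step $Fh\in H^2_q$ (which the paper absorbs into the claim ``$G_-=\tilde\Phi_-$ hence $P_-\tilde B=H|_\cM$''); a cleaner argument than your polynomial approximation is simply that each entry of $Fh$ lies in $H^2\cdot H^2\subset H^1$, so $Fh\in H^1_q\cap L^2_q=H^2_q$.
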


We shall derive the above result as a corollary to  the relaxed commutant lifting theorem \cite[Theorem 1.1]{FFK02}, in a way similar to the way one proves the Nehari theorem  using the classical commutant lifting theorem (see, for example,  \cite[Section~II.3]{FF90}).  For this purpose we need the following notion. We say that an operator $B$ from  $\cM$ into $L^2_q$ is \emph{defined by} a $\Phi\in L^2_{q\ts p}$ if the action of $B$ is given by
\begin{equation}\label{actB}
(Bh)(e^{it})=\Phi(e^{it})h(e^{it})\hspace{.2cm}  a.e. \quad(h\in \cM).
\end{equation}
In that case, $\Phi$ has a finite $\cM$-norm, and  $\|\Phi\|_\cM=\|B\|$.   When \eqref{actB} holds we refer to $\Phi$ as the \emph{defining function} of $B$. The following lemma characterizes operators $B$ from  $\cM$ into $L^2_q$  {defined by} a function $\Phi\in L^2_{q\ts p}$ in terms of an  intertwining relation.

\begin{lemma}\label{L:symbol}
Let $\cM$ be a subspace of $H^2_p$ satisfying \eqref{cM1}, and let $B$ be a
bounded operator from $\cM$ into $L^2_q$. Then $B$ is defined by a $\Phi\in L^2_{q\ts p}$  if and only if $B$ satisfies the intertwining relation $VBR_\cM=BQ_\cM$.  In that case, $\Phi(\cdot)u=BEu(\cdot)$ for any $u\in \BC^p$ and
$\|B\|=\|\Phi\|_\cM$\end{lemma}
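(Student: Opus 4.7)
The plan is to establish both implications of the equivalence and then read off the supplementary formulas. For the $(\Rightarrow)$ direction, if $B$ is defined by some $\Phi\in L^2_{q\ts p}$, I would verify the intertwining by direct computation. For any $h\in H^2_p$, the vector $R_\cM h$ lies in $S^*\cM\subset\cM$, where $B$ acts by pointwise multiplication with $\Phi$. Hence both $VBR_\cM h$ and $BQ_\cM h=BSR_\cM h$ coincide with the $L^2_q$-function $e^{it}\Phi(e^{it})(R_\cM h)(e^{it})$, and we are done.

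For the $(\Leftarrow)$ direction, I would define $\Phi\in L^2_{q\ts p}$ column-by-column by setting $\Phi(\cdot)u:=(BEu)(\cdot)$ for $u\in\BC^p$. This is legitimate because $Eu$ is the constant $H^2_p$-function with value $u$, which lies in $\kr S^*\subset\cM$. The goal is to show $Bm=\Phi m$ for every $m\in\cM$. Set $m_0:=m$, $m_{k+1}:=S^*m_k$, and $u_k:=P_{\kr S^*}m_k$, so that $m_k=u_k+Sm_{k+1}$ is just the decomposition of $m_k$ into its zeroth Taylor coefficient and the shift of the remainder; iterated $S^*$-invariance gives $m_k\in\cM$ for all $k\geq 0$, while $m_{k+1}\in S^*\cM$ holds by construction. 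Consequently $R_\cM m_{k+1}=m_{k+1}$, and the intertwining relation applied to $h=m_{k+1}$ reduces to $VBm_{k+1}=BSm_{k+1}$. A straightforward induction then produces
\[
Bm=\sum_{k=0}^{N-1}V^k(BEu_k)+V^NBm_N\qquad(N\geq 1).
\]
Since $\|m_N\|_{H^2_p}^2=\sum_{k\geq N}\|u_k\|^2\to 0$ and $V$ is isometric on $L^2_q$, the remainder vanishes, giving $Bm=\sum_{k=0}^\iy e^{ikt}\Phi(e^{it})u_k$ with convergence in $L^2_q$.

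To upgrade this to the pointwise identity $(Bm)(e^{it})=\Phi(e^{it})m(e^{it})$ a.e., I would invoke the Riesz subsequence principle. The partial sums $f_N:=\sum_{k=0}^{N-1}e^{ikt}u_k$ converge to $m$ in $L^2_p$, while the equal pointwise products $\Phi f_N=\sum_{k=0}^{N-1}e^{ikt}\Phi u_k$ converge to $Bm$ in $L^2_q$; along a common subsequence both convergences hold almost everywhere, and continuity of matrix-vector multiplication by the fixed matrix $\Phi(e^{it})$ delivers the pointwise identity. The formula $\Phi(\cdot)u=BEu(\cdot)$ is then the very definition of $\Phi$, and the norm equality $\|B\|=\|\Phi\|_\cM$ follows directly from the definition of the $\cM$-norm applied to the bounded operator $m\mapsto\Phi m=Bm$ on $\cM$.

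The principal obstacle I expect is ensuring the iteration closes at every step: applying the intertwining to $m_{k+1}$ requires $R_\cM m_{k+1}=m_{k+1}$, which in turn depends on both $m_{k+1}\in\cM$ and $m_{k+1}\in S^*\cM$. Here both hypotheses in \eqref{cM1} are essential: $S^*\cM\subset\cM$ sustains the iteration on the $H^2_p$-side, while $\kr S^*\subset\cM$ is what makes the very definition of $\Phi$ via the values $BEu$ meaningful.
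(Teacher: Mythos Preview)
Your proof is correct. The paper itself does not give a detailed argument for this lemma; it simply states that the result follows by a modification of the proof of Lemma~3.2 in \cite{FtHK08} and omits the details. Your self-contained argument---verifying the intertwining by direct pointwise computation for the forward direction, and for the converse defining $\Phi$ via $BEu$, telescoping $Bm$ along the Taylor expansion $m=\sum_k S^k u_k$ using $VBm_{k+1}=BSm_{k+1}$, and closing with a Riesz subsequence passage to the a.e.\ pointwise identity---is a clean and standard way to fill in those omitted details, and it makes transparent exactly where each of the two hypotheses in \eqref{cM1} enters.
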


\begin{proof}[\bf Proof]
This result follows by a modification of the proof of Lemma 3.2 in \cite{FtHK08}.
We omit the details.\end{proof}

\begin{proof}[\bf Proof of Proposition \ref{propRestrN}]
Put $\ga_\cM=\|H|_\cM\|$. Recall that the Hankel operator $H$ satifies the intertwining relation $V_- H=HS$. This implies  $V_-H|_\cM R_\cM=H|_\cM Q_\cM$. Here $R_\cM$ and $Q_\cM$ are the operators defined in the first paragraph of the present section. Since $Q_\cM^* Q_\cM =  R_\cM^* R_\cM$ and $V$ is an isometric lifting of $V_-$, the quintet
\begin{equation}\label{liftset}
 \{H|_\cM, V_-, V, R_\cM, Q_\cM, \ga_\cM\}
\end{equation}
is a lifting data set  in the sense of  Section 1 in \cite{FFK02}. Thus Theorem 1.1 in \cite{FFK02} guarantees  the existence of an operator $B$ from $\cM$ into $L^2_q$ with the properties
\begin{equation}\label{rcl3}
P_-B=H|_\cM,\quad V BR_\cM=BQ_\cM, \quad \|B\|= \ga_\cM.
\end{equation}
By Lemma \ref{L:symbol} the second equality in \eqref{rcl3} tells us there exists a $\Phi\in L^2_{q\ts p}$ defining $B$, that is,  the action of  $B$ is given by \eqref{actB}. As $\Phi(\cdot)u=BEu(\cdot)$, the first identity in \eqref{rcl3} shows that $G_-=\Phi_-$, and hence $F:=G-\Phi\in H_{q\ts p}^2$.  Furthermore,
\[
\|G-F\|_\cM=\|\Phi\|_\cM=\|B\|=\ga_\cM,
\]
because of the third identity in \eqref{rcl3}. Thus the quantity $d_\cM$ in  \eqref{inf} is less than or equal to $\ga_\cM$.

It remains to prove that $d_\cM\geq \ga_\cM$. In order to do this, let $\tilde{F} \in H^2_{q\ts p}$ and have a finite $\cM$-norm. Put $\tilde{\Phi}=G-\tilde{F}$. Then $\tilde{\Phi}$ has a finite $\cM$-norm. Let $\tilde{B}$ be the operator from $\cM$ into $L^2_q$ defined by $\tilde{\Phi}$. Since  $\tilde{F} \in H^2_{q\ts p}$, we have $G_-=\tilde{\Phi}_-$, and hence the first identity in \eqref{rcl3} holds with $\tilde{B}$ in place of $B$. It follows that
\[
\|G-\tilde{F}\|_\cM=\|\tilde{\Phi}\|_\cM=\|\tilde{B}\|\geq \|H|_\cM\|=\ga_\cM.
\]
This completes the proof. \end{proof}

\medskip
In the scalar case,  or more generally in the case when $p=1$, the optimal solution is unique. Moreover  this unique solution is given  by a formula analogous to \eqref{bestapp2};  { cf., \cite{AAK68}}. This is the contents of the next proposition which is proved in much the same way as the corresponding result for the Nehari problem. We omit the details.

\begin{proposition}\label{P:VectorSol}
Assume $p=1$, that is, $G\in L^\infty_q$ and $\cM$ a subspace of $H^2$ satisfying \eqref{cM1}. Assume that $H|_\cM$ has a maximizing vector $\psi\in\cM$. Then there exists only one optimal solution $F$ to the $\cM$-restricted Nehari problem \eqref{rcl3}, and this solution is given by
\begin{equation}\label{uniqueSymbol}
F(e^{it})=G(e^{it})- \frac{(H\psi)(e^{it})}{\psi(e^{it})}\  a.e.
\end{equation}
\end{proposition}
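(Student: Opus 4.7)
The plan is to derive the uniqueness and explicit formula by combining the relaxed commutant lifting structure established in Proposition \ref{propRestrN} with the extremal property of the maximizing vector $\psi$. By Proposition \ref{propRestrN} (more precisely, its proof), every optimal solution $F$ has the form $F = G - \Phi$, where $\Phi \in L^2_q$ is the defining function (in the sense of Lemma \ref{L:symbol}) of a bounded operator $B \colon \cM \to L^2_q$ satisfying the three conditions in \eqref{rcl3}. Conversely, any such $B$ gives rise to an optimal $F$. Thus both uniqueness and the formula reduce to showing that $B$ is uniquely determined and that its defining function is $(H\psi)/\psi$.

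First I would argue that $B\psi = H\psi$ for every such optimal $B$. Using $P_-B = H|_\cM$ and the orthogonal decomposition $L^2_q = K^2_q \oplus H^2_q$, one computes
\[
\|B\psi\|^2 = \|H\psi\|^2 + \|(I-P_-)B\psi\|^2.
\]
Since $\psi$ is a maximizing vector of $H|_\cM$ one has $\|H\psi\| = \ga_\cM \|\psi\|$, while $\|B\psi\| \leq \|B\|\,\|\psi\| = \ga_\cM \|\psi\|$ by the third identity in \eqref{rcl3}. Combining these shows equality must hold throughout, forcing $(I-P_-)B\psi = 0$ and hence $B\psi = H\psi$.

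Next I would exploit that $p=1$: the defining function $\Phi$ of $B$ is an element of $L^2_{q\ts 1} = L^2_q$, so the a.e.\ identity $(B\psi)(e^{it}) = \Phi(e^{it})\psi(e^{it})$ becomes a pointwise product of a vector by a scalar. Since $\psi \in H^2$ is non-zero (being a maximizing vector of a non-zero operator; the case $H|_\cM = 0$ is trivial) and a non-zero $H^2$-function vanishes only on a set of Lebesgue measure zero on $\BT$, dividing yields
\[
\Phi(e^{it}) = \frac{(H\psi)(e^{it})}{\psi(e^{it})} \quad \text{a.e.}
\]
This determines $\Phi$, hence $B$, hence $F = G - \Phi$, uniquely, and gives precisely the formula \eqref{uniqueSymbol}.

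The only subtle point is the a.e.\ non-vanishing of $\psi$, which is the standard $F.$\ and M.\ Riesz consequence of $\psi$ being a non-zero function in $H^2$; this is what makes the scalar input dimension $p=1$ essential, since in the general matrix case the analogous division by a matrix-valued $\psi$ is not available, and this is also why uniqueness fails when $p \geq 2$.
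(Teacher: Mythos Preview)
Your proof is correct and follows the classical AAK-type argument, which is precisely what the paper has in mind: it states that the proposition ``is proved in much the same way as the corresponding result for the Nehari problem'' and omits the details. Your use of the correspondence from Proposition~\ref{propRestrN} together with the norm identity $\|B\psi\|^2=\|H\psi\|^2+\|(I-P_-)B\psi\|^2$ and the a.e.\ non-vanishing of a nonzero $H^2$ function is exactly the intended route.
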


In general, if $p>1$ the optimal solution is not unique. To deal with this non-uniqueness, we shall single out a particular optimal solution.

First note that the proof of Proposition \ref{propRestrN} shows that there is a one-to-one correspondence between the optimal solutions of the $\cM$-restricted Nehari problem of $G$ and  all interpolants for $H|_\cM$ with respect to the lifting data set \eqref{liftset}, that is, all operators $B$ from $\cM$ into $L^2_q$ satisfying \eqref{rcl3}. This correspondence is given by
\begin{equation}\label{corresp}
 B\mapsto F=G-\Phi, \ \mbox{where $\Phi$ is the defining function of $B$}.
\end{equation}

Next we use that the relaxed commutant lifting theory tells us that  among  all interpolants for $H|_\cM$ with respect to the lifting data set \eqref{liftset} there is a particular one, which is called the central interpolant for $H|_\cM$ with respect to the lifting data set \eqref{liftset}; see \cite[Section 4]{FFK02}.  This central interpolant  is uniquely determined by a maximum entropy principle (see \cite[Section 8]{FFK02}) and  given by an explicit formula using the operators appearing in the lifting data set.

Using  the correspondence \eqref{corresp} we say that an optimal solution $F$ of the  $\cM$-restricted Nehari problem of $G$  is the \emph{central optimal solution} whenever $\Phi:=G-F$ is the defining function of the central interpolant  $B$   for $H|_\cM$ with respect to the lifting data set \eqref{liftset}. Furthermore, using the formula given in \cite[Section 4]{FFK02} for the central interpolant the correspondence \eqref{corresp} allows us to derive a formula for the central optimal solution. To state this formula we need to make some preparations.

As before   $\ga_\cM =\|H|_\cM\|$. Note that  {$\|HP_\cM S\|\leq\|HP_\cM\|=\|H|_\cM\|$}, where $P_\cM$ is the orthogonal projection of $H^2(\BC^p)$ on $\cM$.  This allows us to define  the following defect operators acting on $H^2(\BC^p)$
\begin{align}
D_\cM&=(\ga_\cM^2 I-P_\cM H^*HP_\cM)^{1/2} \  \mbox{on $H^2(\BC^p)$},
\label{defect1}\\
D_\cM^\circ&=(\ga_\cM^2 I-S^*P_\cM H^*HP_\cM S)^{1/2} \  \mbox{on $H^2(\BC^p)$}.\label{defect2}
\end{align}
For later purposes we note that $S^*D_\cM^2 S= D_\cM^{\circ 2}$.  Next define
\begin{align}
 &\om=
\begin{bmatrix}
 \om_1\\\om_2
\end{bmatrix}: H^2_p\to
\begin{bmatrix}
 \BC^q\\H^2_p
\end{bmatrix},\label{defoma}
\\
&\om(D_\cM Q_\cM)=
\begin{bmatrix}
\Pi H R_\cM \\ D_\cM  R_\cM
\end{bmatrix}  \ands \om|_{ \kr Q_\cM^*D_\cM}=0.\label{defomb}
\end{align}
{From the relaxed commutant lifting theory   we know that $\om$ is a well defined partial isometry with initial space $\cF=\overline{\im D_\cM Q_\cM}$. Furthermore, the forward shift operator $V$ on $L^2_q$ is the Sz.-Nagy-Sch\"affer isometric lifting of $V_-$. Then as a consequence of \cite[Theorem  4.3]{FFK02} and the above analysis we obtain the following result.}

\begin{proposition}\label{propforopt1} Let $G\in L^\iy_{q\ts p}$, and let $\cM$ be a subspace of $H^2_p$  satisfying the conditions in \eqref{cM1}. Then the central optimal solution $F_\cM$ to the $\cM$-restricted Nehari problem is given by $F_\cM=G-\Phi_\cM$, where $\Phi_\cM\in L^2_{q\ts p}$  has finite $\cM$-norm, the co-analytic part of $\Phi_\cM$ is equal to $G_-$, and  the analytic {part $\Phi_{\cM, +}$} of $\Phi_\cM$ is given by
\begin{equation}\label{eqPhiM+}
\Phi_{\cM, +}(\la)=\om_1(I-\la \om_2)^{-1}D_\cM E.
\end{equation}
Here $E$ is defined by \eqref{defE}, and $\om_1$ and $\om_2$ are  defined by \eqref{defoma} and \eqref{defomb}.
\end{proposition}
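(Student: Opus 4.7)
The plan is to apply the explicit formula for the central interpolant from \cite[Theorem 4.3]{FFK02} to the lifting data set \eqref{liftset}, and then convert the resulting operator identity into a formula for the defining function $\Phi_\cM$ via Lemma \ref{L:symbol}.

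By definition, the central optimal solution $F_\cM$ corresponds via \eqref{corresp} to the central interpolant $B_\cM$ of $H|_\cM$ for \eqref{liftset}. Since $B_\cM$ satisfies $VB_\cM R_\cM = B_\cM Q_\cM$ (the second identity in \eqref{rcl3}), Lemma \ref{L:symbol} supplies a defining function $\Phi_\cM\in L^2_{q\ts p}$ such that $\Phi_\cM(\cdot)u = (B_\cM Eu)(\cdot)$ for $u\in\BC^p$, and then $F_\cM = G-\Phi_\cM$. It therefore suffices to identify the co-analytic and analytic parts of $\Phi_\cM$.

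The co-analytic part is determined by any interpolant, central or not. Applying $P_-$ to $B_\cM Eu$ and using $P_-B_\cM = H|_\cM$ from \eqref{rcl3} together with $HEu = P_-(Gu) = G_- u$ gives $\Phi_{\cM,-}u = G_- u$, whence $\Phi_{\cM,-} = G_-$. For the analytic part, \cite[Theorem 4.3]{FFK02} provides an explicit formula for $B_\cM$ in terms of the partial isometry $\om$ defined by \eqref{defoma}--\eqref{defomb} and the Sz.-Nagy-Sch\"affer isometric lifting of $V_-$. In our setting $V$ on $L^2_q$ is precisely this isometric lifting, with the wandering defect space naturally identified with $H^2_q = L^2_q\ominus K^2_q$. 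Specializing the central interpolant formula to $Eu\in H^2_p$ and evaluating the analytic part at $\la\in\BD$ yields the Neumann series $\om_1(I-\la\om_2)^{-1}D_\cM Eu$, which is exactly $\Phi_{\cM,+}(\la)u$ by the value formula in Lemma \ref{L:symbol}.

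The main obstacle lies in the bookkeeping of this last step: one must identify the abstract Sz.-Nagy-Sch\"affer model of the isometric lifting of $V_-$ with the concrete forward shift $V$ on $L^2_q$, recognize $\om_2$ as an operator on the coefficient space $H^2_p$, and verify that evaluation at $\la\in\BD$ transforms the operator-valued formula of \cite[Theorem 4.3]{FFK02} into the geometric series in $\la\om_2$. Once this identification is carried out, the formula \eqref{eqPhiM+} for $\Phi_{\cM,+}$ is an immediate consequence of the cited theorem and requires no further computation.
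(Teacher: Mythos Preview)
Your proposal is correct and follows the same route as the paper: the proposition is stated there as a direct consequence of \cite[Theorem 4.3]{FFK02} applied to the lifting data set \eqref{liftset}, with the passage from the central interpolant $B_\cM$ to its defining function $\Phi_\cM$ going through the correspondence \eqref{corresp} and Lemma \ref{L:symbol}, exactly as you outline. Your write-up in fact supplies more of the bookkeeping (the identification of $V$ with the Sz.-Nagy--Sch\"affer lifting, the derivation of $\Phi_{\cM,-}=G_-$, and the conversion of the operator formula to the $\la$-series) than the paper spells out, but the argument is the same.
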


It is this  central optimal solution $F_\cM$ we shall be working with. {From Corollary~4.4 in \cite{FFK02}  (see also \cite[Theorem 1.1]{FtHK06}) we know that $\cF=\overline{\im D_\cM Q_\cM}=\cD_\cM$ implies that the central solution of \eqref{rcl3}  is the only optimal solution to the  $\cM$-restricted Nehari problem. The latter  fact will play a role in Section \ref{S:centropt2}.}

\setcounter{equation}{0}
\section{Statement of the main convergence result}\label{S:MainRes}


Let $G\in L^\iy_{q\ts p}$, and let $H$ be the Hankel operator defined by the co-analytic part of $G$.  In our main approximation result we  shall assume that the following two conditions are satisfied:
\begin{itemize}
\item[(C1)] $H$ has finite rank,
\item[(C2)] none of  the maximizing vectors of $H$ belongs $SH_p^2$, and  the space spanned by the maximizing vectors of $H$ has dimension $p$.
\end{itemize}
{Note that (C1) is equivalent to $G$ being the sum of a rational matrix function with all its poles in $\BD$ and a matrix-valued $H^\iy$ function. }

{In the scalar case the second part of (C2) implies the first part. To see this let $p=q=1$, and assume that the space spanned by the maximizing vectors of $H$ is one dimensional.  Let $Sv$ be  a maximizing vector of $H$. Since $S$ is an isometry and $V_-H=HS$, we have $v\not =0$ and
\[
 \|H\|  \|v\|= \|H\|  \|Sv\|= \|HSv\|= \|V_-Hv\|\leq \|Hv\|\leq  \|H\|  \|v\|.
\]
Thus the inequalities are equalities, and $v$ is a maximizing vector of $H$. As the the space spanned by the maximizing vectors of $H$ is assumed to be one dimensional, $v$ must be a scalar multiple of $Sv$, which can only happen when $v=0$, which contradicts  $v\not =0$. Thus the first part of (C2) is fulfilled. Next observe that for $p=q=1$ the statement ``the space spanned by the maximizing vectors of $H$ has dimension one'' is  just equivalent to the requirement  that $\|H\|$ is  a simple singular value of $H$, which is precisely the condition used in Theorem 2 of the Helton-Young paper \cite{HY90}. }

 As we shall see in Section \ref{S:NehariCase} the two conditions (C1)  and (C2)  guarantee that the solution to the optimal Nehari problem  is unique.

 For our approximation scheme we fix a finite dimensional subspace $\cM_0$ of $H^2_p$ invariant under $S^*$, and we define recursively
\begin{equation}\label{cMk}
\cM_{k}=\kr S^*\oplus S\cM_{k-1},\quad k\in \BN.
\end{equation}
{Since $\cM_0$ is invariant under $S^*$, the space $\cM_0^\perp$ is invariant under $S$, and the Beurling-Lax theorem tells us that  $ \cM_0^\perp=\Th H^2_\ell$, where $\Th\in H^\iy_{p\ts \ell}$ and can be taken to be inner. Using this representation one checks that $\cM_k=H^2_p\ominus z^k\Th H^2_\ell$ for each $k\in \BN$. It follows that $\cM_0\subset \cM_1\subset  \cM_2\subset \cdots$ and $\bigvee_{k\geq 0}\cM_k=H^2_p$. Furthermore,}
\begin{equation}\label{Mconditions}
   S^*\cM_k\subset \cM_k \ands \kr S^*\subset \cM_k, \quad k\in \BN.
\end{equation}
Note that  the spaces $\cM_k=H^2\ominus z^kqH^2$, $k=1,2,\ldots$, appearing in \cite{HY90} satisfy \eqref{cMk} with $\cM_0=H^2\ominus qH^2$.

\begin{theorem}\label{T:main2}
Let $G\in L^\iy_{q\ts p}$. Assume that conditions $(C1)$ and $(C2)$ are  satisfied, and  let the sequence of subspaces $\{\cM_k\}_{k\in\BN}$ be defined by \eqref{cMk} with $\cM_0$ a finite dimensional $S^*$-invariant subspace of $H^2_p$. Let ${F}$ be the  unique optimal solution  to the Nehari problem for $G$, and  for each $k\in\BN_+$ let $F_k$  be the central  optimal  solution to  the $\cM_k$-restricted Nehari problem.  Then   $G-{F}$ is  a rational function in $H^\iy_{q\ts p}$, and  for $k\in\BN_+$ sufficiently large,   the same holds true for   $G-F_k$. Furthermore,  $\|F_k-{F}\|_{\iy}\to 0$ for $k\to \iy$.
More precisely, if all the poles of $G$ inside $\BD$ are within the disk $\BD_r=\{\la\mid |\la|<r\}$, for $r<1$, then there exists a number $L>0$ such
that $\|F_k-\hat{F}\|_{\iy}< Lr^k$ for $k$ large enough.
\end{theorem}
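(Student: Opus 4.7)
The plan is to combine the explicit realization of the central optimal solution from Proposition~\ref{propforopt1} with the finite-dimensional state-space structure afforded by (C1), and then quantify convergence using the spectral radius bound $\spec(Z)\le r$ coming from the restricted backward shift realization of $G_-$.

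I would first establish uniqueness of the optimal Nehari solution $F$. Applying Proposition~\ref{propforopt1} with $\cM=H^2_p$, so that $R_\cM=I$ and $Q_\cM=S$, gives the realization $\Phi_+(\la)=\om_1(I-\la\om_2)^{-1}DE$ for $\Phi=G-F$, with $D=(\ga^2I-H^*H)^{1/2}$ and $\ga=\|H\|$. Condition (C2) is precisely what forces $\overline{\im DS}=\overline{\im D}$: the space $\ker D$ of maximizing vectors has dimension $p=\dim\ker S^*$ and meets $SH^2_p$ trivially, hence is the graph of a linear map $\ker S^*\to SH^2_p$, and for each $b\in\ker S^*$ the corresponding $a\in SH^2_p$ satisfies $Db=-DSa\in\im DS$, so $D(\ker S^*)\subset\overline{\im DS}$. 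The remark at the end of Section~\ref{secRestrN} then yields that $F$ is the unique optimal Nehari solution. A parallel application of Proposition~\ref{propforopt1} with $\cM=\cM_k$ gives $\Phi_{k,+}(\la)=\om_1^{(k)}(I-\la\om_2^{(k)})^{-1}D_{\cM_k}E$ for $\Phi_k=G-F_k$. Under (C1) the operator $H^*H$ has finite rank, so after factoring through the closures of the ranges of the defect operators these realizations live on finite-dimensional state spaces, which shows that $G-F$, and for $k$ large also $G-F_k$, are rational matrix functions in $H^\iy_{q\ts p}$.

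The $H^\iy$ error estimate then follows from comparing the two realizations. The decisive decay mechanism is the intertwining $V_-^kH=HS^k$: since $\cM_k^\perp=z^k\Th H^2_\ell\subset S^kH^2_p$, it gives $HP_{\cM_k^\perp}=Z^kH^{(k)}_\ast$ for a uniformly bounded remainder $H^{(k)}_\ast$, and since $Z$ is finite-dimensional with $\spec(Z)\le r$ we have $\|Z^k\|\le Mr^k$ (after possibly enlarging $r$ slightly to absorb Jordan-form polynomial factors). This yields
\[
\|P_{\cM_k}H^*HP_{\cM_k}-H^*H\|=O(r^{2k}),
\]
and Lipschitz continuity of the matrix square root away from zero, applicable thanks to the fixed dimension of $\ker D$ provided by (C2), gives $\|D_{\cM_k}-D\|=O(r^k)$. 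Tracing this through the defining relations \eqref{defoma}--\eqref{defomb} produces $\|\om_i^{(k)}-\om_i\|=O(r^k)$ for $i=1,2$. Because $\om_2$ inherits from $Z$ the spectral radius bound $\spec(\om_2)\le r<1$, the resolvent $(I-\la\om_2)^{-1}$ is uniformly bounded on $\overline{\BD}$, so the standard resolvent perturbation identity gives $\|\Phi_{k,+}-\Phi_+\|_\iy\le Lr^k$. As $\Phi_{k,-}=\Phi_-=G_-$ by construction, this is also the desired estimate $\|F_k-F\|_\iy<Lr^k$ for $k$ large.

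The step I expect to be the main obstacle is the rate estimate $\|\om_i^{(k)}-\om_i\|=O(r^k)$. Unlike the defect operators, which act on the fixed Hilbert space $H^2_p$, the partial isometries $\om^{(k)}$ and $\om$ are defined by prescribed actions on the initial spaces $\overline{\im D_{\cM_k}Q_{\cM_k}}$ and $\overline{\im DS}$ respectively, together with annihilation on the orthogonal complements; these initial spaces move with $k$, so the comparison requires a gap-type estimate between subspaces whose dimensions may a priori fluctuate. Here (C2) will be used again to ensure that $\ker D_{\cM_k}$ stabilizes at dimension $p$ for $k$ large, so that for such $k$ the initial spaces have matching dimension and a uniform perturbation analysis is possible. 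Once the gap estimate with rate $r^k$ is in place, the remaining manipulations are routine resolvent and square-root perturbation calculations on finite-dimensional spaces.
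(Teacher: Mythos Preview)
Your uniqueness argument via (C2) and the reduction $\overline{\im DS}=\overline{\im D}$ is correct and matches the paper's Lemma~\ref{L:F=D}(i). The decisive gap is the sentence ``$\om_2$ inherits from $Z$ the spectral radius bound $\spec(\om_2)\le r<1$, so the resolvent $(I-\la\om_2)^{-1}$ is uniformly bounded on $\overline{\BD}$.'' This is false. Since $D=\ga I+\text{compact}$ and $D^{\circ-2}=\ga^{-2}I+\text{compact}$ under (C1), you have $\om_2=DD^{\circ-2}S^*D=S^*+\text{compact}$, and the essential spectrum of $S^*$ on $H^2_p$ is the full unit circle $\BT$. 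Hence $\spec(\om_2)=1$, and $(I-\la\om_2)^{-1}$ is \emph{not} uniformly bounded on $\overline{\BD}$; your ``standard resolvent perturbation identity'' cannot deliver an $H^\iy$ estimate. The same obstruction applies to $\om_2^{(k)}$. The function $\Phi_+(\la)=\om_1(I-\la\om_2)^{-1}DE$ does extend boundedly to $\overline{\BD}$, but only because of cancellations between the factors, not because the middle resolvent is bounded.

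The paper's route is precisely designed to bypass this. It first rewrites the central solution as $\Phi_{\cM,+}=N_\cM M_\cM^{-1}$ (Theorem~\ref{T:main1}) and then, via the identities \eqref{LaM4}, pushes the state space down to the \emph{finite-dimensional} space $\cX=\overline{\im H}$, producing formulas in terms of $Z,W,\De_\cM,\Xi_\cM$ (Corollary~\ref{corNM4}, Theorem~\ref{T:NehariCase}). The relevant spectral bound is then $\spec(Z^*\De^{-1}\Xi)<1$ on $\cX$, which \emph{is} inherited from strong stability of $\La$ because $\cX$ is finite-dimensional. Convergence is obtained by splitting $N_k=N_{k,1}+N_{k,2}$, $M_k=M_{k,1}+M_{k,2}$ and showing $\De_k\to\De$, $\Xi_k\to\Xi$ at rate $r^{2k}$ (so $N_{k,1}\to N$, $M_{k,1}\to M$) while the ``projection defect'' terms $N_{k,2},M_{k,2}$ carrying $I-R_{\cM_k}$ vanish at rate $r^k$. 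This finite-dimensional reduction is not a cosmetic simplification; it is what makes the uniform-on-$\overline{\BD}$ estimates possible. Your proposal would need an analogous reduction before any resolvent comparison can succeed; the obstacle you flagged (comparing $\om^{(k)}$ to $\om$ across moving initial spaces) is secondary to this, and in fact the paper never needs such a comparison at all.
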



Improving the rate of convergence is one of the main issues in  \cite{HY90}, where it is shown that for the case when  the poles of $G$ inside $\BD$ are close to the unit circle, that is, $r$ close to $1$, convergence with $\cM_0=\{0\}$ may occur at a slow rate. In  \cite{HY90} it is also shown how to choose (in the scalar case) a scalar polynomial $q$ so that   the choice $\cM_0=H^2\ominus qH^2$  increases the rate of convergence.  {In fact, if the roots of $q$ coincide with the poles of $G$ in $\BD_r\backslash \BD_{_0}$, then starting with $\cM_0=H^2\ominus qH^2$ the convergence is of order $O(r_0^k)$  rather than $O(r^k)$.} In Section \ref{secprfmthm} we shall see that  Theorem \ref{T:main2} remains true if $r<1$ is larger than the spectral radius of the operator $V_-|_{H\cM_0^\perp}$, and thus  {again} the convergence rate can be improved by an appropriate choice of $\cM_0$. To give a trivial example: when $\cM_0$ is chosen in such a way that it includes $\im H^*$, all the central optimal solutions $F_k$  in Theorem \ref{T:main2} coincide with the unique optimal solution  solution $F$ to the Nehari problem.

\setcounter{equation}{0}
\section{The central optimal  solution revisited}\label{S:centropt2}

As before  $G\in L^\infty_{q\ts p}$ and $H$ is the Hankel operator defined by the co-analytic part of $G$.  Furthermore,  $\cM$ is a subspace of $H^2_p$ satisfying \eqref{cM1}. In this section we assume that $\|HP_\cM S\|< \ga_\cM=\|HP_\cM \|$. In other words, we assume that the defect operator $D_\cM^\circ$ defined by \eqref{defect2} is invertible. This additional condition allows us to simplify the formula for the central optimal solution to the $\cM$-restricted Nehari problem presented in Proposition \ref{propforopt1}. We shall prove the following theorem.

\begin{theorem}\label{T:main1}
Let $G\in L^\infty_{q\ts p}$, and let $\cM$ be a subspace of $H^2_p$ satisfying \eqref{cM1}.  Assume the defect operator $D_\cM^\circ$ defined by \eqref{defect2} is invertible, and put
\begin{equation}\label{LambdaM}
\La_\cM=D_\cM^{\circ-2} S^* D_\cM^2.
\end{equation}
Then $\spec(\La_\cM)\leq 1$, and  the central optimal solution $F_\cM$ to the $\cM$-restricted Nehari problem is given by $F_\cM=G-\Phi_\cM$, where $\Phi_\cM\in L^2 _{q\ts p}$  has finite $\cM$-norm, the co-analytic part of $\Phi_\cM$ is equal to $G_-$, and  the analytic part of $\Phi_\cM$ is given by
\begin{equation} \label{solform}
\Phi_{\cM,+}(\la) =\Pi H(I-\la \La_\cM)^{-1}\La_\cM E= N_\cM(\la) M_\cM(\la)^{-1} \quad (\la \in \BD),
\end{equation}
where
\begin{equation}\label{solform2}
N_\cM(\la)=\Pi H(I-\la S^*)^{-1}\La_\cM E,\quad
M_\cM(\la)=I-\la E^*(I-\la S^*)^{-1}\La_\cM E.
\end{equation}
In particular, $M(\la)$ is invertible for each $\la\in\BD$.
\end{theorem}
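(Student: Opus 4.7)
My plan is to use the invertibility of $D_\cM^\circ$ to transform the formula of Proposition \ref{propforopt1}, namely $\Phi_{\cM,+}(\la) = \om_1(I - \la\om_2)^{-1}D_\cM E$, into the two formulas stated in the theorem. The foundational computation is
\[ \La_\cM S = D_\cM^{\circ-2}S^* D_\cM^2 S = D_\cM^{\circ-2}D_\cM^{\circ 2} = I, \]
which shows that $\La_\cM$ is a left inverse of the shift. Consequently $\La_\cM - S^*$ vanishes on $\im S = (\kr S^*)^\perp$, giving the clean decomposition $\La_\cM = S^* + \La_\cM E E^*$; this exhibits $\La_\cM$ as a rank-$p$ perturbation of the backward shift, and will be the engine of both the analytic manipulations and the spectral analysis. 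Starting from this decomposition, the equality $\Pi H(I - \la\La_\cM)^{-1}\La_\cM E = N_\cM(\la)M_\cM(\la)^{-1}$ is a direct Sherman--Morrison--Woodbury computation: factor $I - \la\La_\cM = (I - \la S^*) - \la\La_\cM E E^*$, invert the rank-$p$ piece through the $p\times p$ matrix $M_\cM(\la)$, obtain $(I - \la\La_\cM)^{-1}\La_\cM E = (I - \la S^*)^{-1}\La_\cM E\,M_\cM(\la)^{-1}$, and premultiply by $\Pi H$.

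The main analytic work is to deduce $\Phi_{\cM,+}(\la) = \Pi H(I - \la\La_\cM)^{-1}\La_\cM E$ from Proposition \ref{propforopt1}. Invertibility of $D_\cM^\circ$ implies that $D_\cM S$ factors as $T D_\cM^\circ$ for an isometry $T$, so $D_\cM S$ is bounded below, $\cF = D_\cM S(\overline{S^*\cM})$ is closed, and a short verification gives that both $D_\cM^{\circ 2}$ and $D_\cM^{\circ-2}$ preserve $\overline{S^*\cM}$. The defining relations of the partial isometry $\om$ then simplify on $\cF$ to $\om_2 = D_\cM D_\cM^{\circ-2}S^* D_\cM$ and $\om_1 = \Pi H D_\cM^{\circ-2}S^* D_\cM$. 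The key structural observation is that $\La_\cM$ maps $\cM$ into $\overline{S^*\cM}\subset\cM$: for $g\in\cM$, $D_\cM^2 g$ stays in $\cM$ (as $D_\cM^2 = \ga_\cM^2 I - P_\cM H^* H P_\cM$ leaves $\cM$ invariant), so $S^* D_\cM^2 g \in S^*\cM$, and $D_\cM^{\circ-2}$ keeps us inside $\overline{S^*\cM}$. A brief inner-product calculation shows that $P_\cF D_\cM g = D_\cM S\La_\cM g$ for every $g\in\cM$, which yields the identities $\om_2 D_\cM g = D_\cM\La_\cM g$ and $\om_1 D_\cM g = \Pi H\La_\cM g$ on $\cM$. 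Since $Eu\in\cM$ and hence $\La_\cM^k Eu\in\cM$ for every $k\ge 0$, induction produces $\om_1\om_2^k D_\cM Eu = \Pi H\La_\cM^{k+1}Eu$. Summing the resulting geometric series, which converges on $\BD$ because $\|\om_2\|\leq 1$, gives the first formula interpreted as a power series.

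The remaining task is to upgrade this power-series identity to a genuine resolvent formula, that is, to prove $\spec(\La_\cM)\leq 1$ and that $M_\cM(\la)$ is invertible for every $\la\in\BD$. Using $(I - \la\La_\cM)S = S - \la I$ together with the rank-$p$ structure of $\La_\cM - S^*$, a Fredholm-type calculation shows that both statements reduce to the single claim that $\det M_\cM(\la)\neq 0$ on $\BD$. This is the step I expect to be the main obstacle. My approach is to combine the power-series identity $\Phi_{\cM,+}(\la)M_\cM(\la) = N_\cM(\la)$ (which is derived during the proof of the first formula) with the analyticity of $\Phi_{\cM,+}$ on $\BD$ as the analytic part of an $L^2$ function, to rule out poles of the meromorphic expression $N_\cM(\la)M_\cM(\la)^{-1}$ inside $\BD$. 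Care is needed because the uniqueness shortcut from Corollary 4.4 of \cite{FFK02} requires $\cF = \cD_\cM$, and this equality does not follow from mere invertibility of $D_\cM^\circ$; the invertibility of $M_\cM$ must therefore be argued directly from the analytic structure of the factorization.
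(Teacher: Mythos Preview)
Your overall plan is essentially the paper's: derive $\om_1 D_\cM = \Pi H \La_\cM P_\cM$ and $\om_2 D_\cM = D_\cM \La_\cM P_\cM$ from the explicit projection $P_\cF$, feed this into Proposition~\ref{propforopt1}, and then use $\La_\cM = S^* + \La_\cM EE^*$ with a Sherman--Morrison--Woodbury step to get the $N_\cM M_\cM^{-1}$ factorization. All of that is correct and matches the paper.

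The genuine gap is exactly where you flagged it: your proposed route to $\spec(\La_\cM)\le 1$ via analyticity of $\Phi_{\cM,+}$ does not close. From the power-series identity you obtain, by analytic continuation, $\Phi_{\cM,+}(\la)M_\cM(\la)=N_\cM(\la)$ on all of $\BD$. But if $M_\cM(\la_0)v=0$ for some $\la_0\in\BD$ and $v\neq 0$, this only forces $N_\cM(\la_0)v=0$; it does not produce a contradiction. Equivalently, analyticity of $\Phi_{\cM,+}$ shows that the meromorphic function $N_\cM M_\cM^{-1}$ has removable singularities, not that $\det M_\cM$ is zero-free on $\BD$. So your reduction ``$\spec(\La_\cM)\le 1 \Leftrightarrow \det M_\cM\neq 0$ on $\BD$'' is correct, but the analyticity input is too weak to decide the right-hand side.

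The paper handles this step by a short direct argument that uses only facts you have already established. Since $\om_2 D_\cM = D_\cM \La_\cM P_\cM$, the $\spec(AB)=\spec(BA)$ identity gives
\[
\spec(\om_2)=\spec\bigl(D_\cM R_\cM D_\cM^{\circ-2}S^*D_\cM\bigr)=\spec\bigl(R_\cM D_\cM^{\circ-2}S^*D_\cM^2\bigr)=\spec(R_\cM\La_\cM)=\spec(\La_\cM P_\cM),
\]
so $\spec(\La_\cM P_\cM)\le 1$. You already know $\La_\cM\cM\subset\cM$; hence with respect to $H^2_p=\cM\oplus\cM^\perp$ the operator $\La_\cM$ is block upper triangular with diagonal blocks $P_\cM\La_\cM|_\cM$ and $(I-P_\cM)\La_\cM|_{\cM^\perp}$. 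The first block has spectral radius $\le 1$ by the computation above (note $\La_\cM P_\cM=P_\cM\La_\cM P_\cM$ since $\La_\cM\cM\subset\cM$). For the second, use $(I-P_\cM)(I-R_\cM)=I-P_\cM$ together with $(I-R_\cM)\La_\cM=(I-R_\cM)S^*$ (which follows from $D_\cM^{\circ 2}(I-R_\cM)=\ga_\cM^2(I-R_\cM)$ and $D_\cM^2(I-P_\cM)=\ga_\cM^2(I-P_\cM)$) to obtain $(I-P_\cM)\La_\cM(I-P_\cM)=(I-P_\cM)S^*(I-P_\cM)$, a contraction. Thus $\spec(\La_\cM)\le 1$, and invertibility of $M_\cM$ on $\BD$ follows immediately from your Woodbury reduction. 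Replace your analyticity argument with this block-triangular computation and the proof is complete.
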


{The formulas in the above theorem  for the central optimal solution are inspired by the formulas for  the central suboptimal solution in Sections IV.3 and IV.4 of \cite{FFGK98}.}

We first  prove two lemmas.  In what follows   $P_\cM$ and  $R_\cM$ are  the orthogonal projections of  $H^2_p$ onto $\cM$ and $S^*\cM$, respectively, and $Q_\cM=SR_\cM$.

\begin{lemma}\label{lemPQR}Let  $\cM$ be a subspace of $H^2_p$ satisfying \eqref{cM1}.  Then
\begin{equation}\label{PQR}
 R_\cM=S^*P_\cM S, \quad R_\cM S^*=S^*P_\cM, \quad Q_\cM=P_\cM S.
 \end{equation}
 \end{lemma}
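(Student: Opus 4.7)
The plan is to verify each identity directly using the two structural hypotheses on $\cM$, namely $S^*\cM\subset\cM$ and $\ker S^*\subset\cM$. Both will feed into the computations through the single basic relation $SS^* = I - EE^*$, where $EE^*$ is the orthogonal projection onto $\ker S^*$, together with the consequences $S^*E=0$ and (crucially) $P_\cM EE^* = EE^*$, which holds because $\ker S^*\subset\cM$.

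For the first identity $R_\cM = S^*P_\cM S$, I would show that the operator on the right is an orthogonal projection with range $S^*\cM$. Self-adjointness is immediate, and its range is clearly contained in $S^*\cM$. To show it fixes $S^*\cM$, take $S^*m$ with $m\in\cM$ and compute $S^*P_\cM SS^*m = S^*P_\cM(m - EE^*m)$; since $EE^*m\in\ker S^*\subset\cM$, this equals $S^*m - S^*EE^*m = S^*m$ because $S^*E = 0$. To show it kills $(S^*\cM)^\perp$, note that $h\perp S^*\cM$ iff $Sh\perp\cM$ iff $P_\cM Sh = 0$.

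For the remaining two identities I would substitute the formula just obtained and expand using $SS^* = I - EE^*$. This gives $R_\cM S^* = S^*P_\cM SS^* = S^*P_\cM - S^*P_\cM EE^*$, and the second term vanishes because $P_\cM EE^* = EE^*$ (by $\ker S^*\subset\cM$) and $S^*EE^* = 0$. Similarly $Q_\cM = SR_\cM = SS^*P_\cM S = P_\cM S - EE^*P_\cM S$; here the extra term vanishes because $EE^*P_\cM S = 0$, which I would verify by checking that for any $v\in\ker S^*$, $\langle P_\cM Sh,v\rangle = \langle Sh,v\rangle = \langle h,S^*v\rangle = 0$ (using $v\in\cM$ in the first equality).

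There is no real obstacle here — the lemma is a routine bookkeeping calculation — but the one place where care is needed is in keeping track of exactly which hypothesis enters each step: the invariance $S^*\cM\subset\cM$ is used implicitly in defining $S^*\cM$ as a subspace on which $R_\cM$ projects, while the absorption $\ker S^*\subset\cM$ is the workhorse, delivering the key identity $P_\cM EE^* = EE^*$ that drives all three cancellations.
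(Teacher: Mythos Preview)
Your argument is correct and follows essentially the same line as the paper's proof: both hinge on the identity $P_\cM(I-SS^*)=I-SS^*$ (your $P_\cM EE^*=EE^*$) coming from $\ker S^*\subset\cM$, and both derive the second identity by expanding $S^*P_\cM SS^*$. The only noteworthy difference is cosmetic: for the first identity the paper verifies idempotence of $S^*P_\cM S$ directly rather than checking its action on $S^*\cM$ and $(S^*\cM)^\perp$, and for the third identity the paper simply takes the adjoint of the second, $Q_\cM=SR_\cM=(R_\cM S^*)^*=(S^*P_\cM)^*=P_\cM S$, which saves you the separate verification that $EE^*P_\cM S=0$.
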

\begin{proof}[\bf Proof]  Note that
\[
 (S^*P_\cM S)^2=S^*P_\cM SS^*P_\cM S= S^*P_\cM S-S^*P_\cM(I- SS^*)P_\cM S.
\]
Since $I- SS^*$ is the orthogonal projection onto  $\kr S^*$, the second part of \eqref{cM1}  implies that $P_\cM(I- SS^*)=I- SS^*$.  Thus $ (S^*P_\cM S)^2= S^*P_\cM S$, and hence $S^*P_\cM S$ is an orthogonal projection. The range of this orthogonal  projection is $S^*\cM$, and therefore the first identity in \eqref{PQR} is proved.

Using this first identity  and $P_\cM(I- SS^*)=I- SS^*$ we see that
\[
R_\cM S^*=S^*P_\cM SS^*=S^*P_\cM-S^*P_\cM(I-SS^*)=S^*P_\cM.
\]
Thus the second identity in \eqref{PQR} also holds.  Finally,
\[
Q_\cM=SR_\cM=(R_\cM S^*)^*=( S^* P_\cM)^*=  P_\cM S.
\]
Thus \eqref{PQR} is proved. \end{proof}

\begin{lemma}\label{lemprojcF}
Let $G\in L^\infty_{q\ts p}$, and let $\cM$ be a subspace of $H^2_p$ satisfying \eqref{cM1}.  Assume the defect operator $D_\cM^\circ$ defined by \eqref{defect2} is invertible. Then the range $\cF$ of the operator $D_\cM Q_\cM$ is closed and  the orthogonal projection of  $H^2_p$ onto $\cF$ is given by
 \begin{equation}\label{projcF}
P_\cF= D_\cM Q_\cM D_{\cM}^{\circ -2}Q_\cM^* D_{\cM}.
\end{equation}
\end{lemma}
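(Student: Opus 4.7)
The plan is to verify directly that the candidate operator $P:=D_\cM Q_\cM D_\cM^{\circ -2}Q_\cM^*D_\cM$ is an orthogonal projection whose range equals the range of $T:=D_\cM Q_\cM$; closedness of $\cF$ then follows automatically from closedness of the range of any projection. Self-adjointness of $P$ is immediate since all factors are self-adjoint. All the content lives in the identity $PT=T$, which simultaneously yields $P^2=P$ (because $Py=T\bigl(D_\cM^{\circ -2}Q_\cM^*D_\cM y\bigr)$ lies in $\ran T$, so $P(Py)=Py$) and $\ran P=\ran T$.

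The first step is to compute $T^*T$. Using Lemma~\ref{lemPQR} in the form $Q_\cM=P_\cM S$ together with the fact that $P_\cM$ commutes with $D_\cM^{2}=\ga_\cM^{2}I-P_\cM H^*HP_\cM$, one obtains
\[
T^*T=S^*P_\cM D_\cM^{2}P_\cM S=\ga_\cM^{2}R_\cM-S^*P_\cM H^*HP_\cM S=D_\cM^{\circ 2}-\ga_\cM^{2}(I-R_\cM).
\]
The crucial structural observation is that $D_\cM^{\circ 2}$ is block-diagonal with respect to $H^2_p=S^*\cM\oplus(S^*\cM)^\perp$. Indeed, $D_\cM^{\circ 2}=\ga_\cM^{2}I-Q_\cM^*H^*HQ_\cM$, and since $(S^*\cM)^\perp=\kr R_\cM=\kr Q_\cM$ while $\ran Q_\cM^*\subset S^*\cM$, the perturbation $Q_\cM^*H^*HQ_\cM$ annihilates $(S^*\cM)^\perp$ and leaves $S^*\cM$ invariant. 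Consequently $D_\cM^{\circ -2}$ inherits the same block structure and in particular acts as $\ga_\cM^{-2}I$ on $(S^*\cM)^\perp$.

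Assembling these ingredients,
\[
PT=TD_\cM^{\circ -2}T^*T=T-\ga_\cM^{2}\,TD_\cM^{\circ -2}(I-R_\cM)=T-\ga_\cM^{2}\cdot\ga_\cM^{-2}\,D_\cM Q_\cM(I-R_\cM)=T,
\]
where the final equality uses $Q_\cM(I-R_\cM)=SR_\cM(I-R_\cM)=0$. This establishes $PT=T$ and hence the lemma. I expect the only real obstacle to be nailing down the block-diagonal structure of $D_\cM^{\circ 2}$ cleanly enough to justify inverting it and applying the inverse to $I-R_\cM$; once that is secured, the rest is routine bookkeeping with the identities of Lemma~\ref{lemPQR}.
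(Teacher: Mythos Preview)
Your argument is correct and complete. The route differs from the paper's in a useful way. The paper first records the intertwining relation $D_\cM^\circ R_\cM=R_\cM D_\cM^\circ$ (derived from the alternative formula $D_\cM^{\circ 2}=\ga_\cM^2 I-R_\cM S^*H^*HSR_\cM$) and then computes $P^2$ directly, reducing $Q_\cM^*D_\cM^2 Q_\cM$ to $R_\cM D_\cM^{\circ 2}R_\cM$ and using $Q_\cM R_\cM=Q_\cM$; it then asserts without further comment that $P$ being an orthogonal projection forces $\ran P=\cF$. Your approach instead targets the single identity $PT=T$, obtaining it from the closed form $T^*T=D_\cM^{\circ 2}-\ga_\cM^2(I-R_\cM)$ together with the block-diagonal structure of $D_\cM^{\circ 2}$ relative to $S^*\cM\oplus(S^*\cM)^\perp$. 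That block-diagonality is of course exactly the content of the paper's intertwining relation, so the underlying algebra is the same; what you gain is that $PT=T$ delivers both idempotence and the range identification $\ran P=\ran T$ in one stroke, making the step the paper leaves implicit fully explicit. One cosmetic remark: your phrase ``all factors are self-adjoint'' is not literally true ($Q_\cM=SR_\cM$ is not self-adjoint), but the intended point---that $P$ has the symmetric form $ABCB^*A$ with $A,C$ self-adjoint---is clear and correct.
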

\begin{proof}[\bf Proof]
We begin with two identities:
\begin{equation}\label{interRP}
 D_\cM P_\cM=P_\cM D_\cM , \quad D_{\cM}^{\circ} R_\cM=R_\cM D_{\cM}^{\circ}.
\end{equation}
Since $P_\cM$ is an orthogonal projection,   the first equality in \eqref{interRP} follows directly from the definition of  $D_\cM$ in \eqref{defect1}. To prove the second, we use  the second identity in \eqref{PQR}. Taking adjoints and using the fact that $R_\cM$ and $P_\cM$ are orthogonal projections, we see that  $P_\cM S=SR_\cM$. It follows that $D_{\cM}^{\circ}$ is also given by
\begin{equation}\label{defect3}
 D_{\cM}^{\circ}= (\ga_\cM^2 I-R_\cM S^*H^*HSR_\cM )^{1/2}.
\end{equation}
From this formula for $D_{\cM}^{\circ}$ the second identity in \eqref{interRP} is clear.

Now assume that $D_{\cM}^{\circ}$ is invertible, and let $P$ be the operator defined by the right hand side of \eqref{projcF}.  Clearly, $P$ is selfadjoint. Let us prove that $P$  is a projection.  Using the second equality in \eqref{interRP} we have
\begin{align*}
 P^2&=  D_\cM Q_\cM D_{\cM}^{\circ -2}Q_\cM^* D_{\cM}^2
 Q_\cM D_{\cM}^{\circ -2}Q_\cM^* D_{\cM}
 \\
 &=D_\cM Q_\cM D_{\cM}^{\circ -2}(R_\cM S^*D_{\cM}^2 SR_\cM)D_{\cM}^{\circ -2}Q_\cM^* D_{\cM}
  \\
 &=D_\cM Q_\cM  R_\cM D_{\cM}^{\circ -2}(S^*D_{\cM}^2 S) D_{\cM}^{\circ -2} R_\cM Q_\cM^* D_{\cM}.
\end{align*}
Observe that $Q_\cM  R_\cM= S  R_\cM^2= S  R_\cM=Q_\cM$. Since $D_{\cM}^{\circ 2} = S^*D_{\cM}^2S$, it follows that
\[
P^2=D_\cM Q_\cM D_{\cM}^{\circ -2}Q_\cM^* D_{\cM}=P.
\]
 Thus $P$ is an orthogonal projection. This implies that $D_\cM Q_\cM$ has a closed range, and $P_\cF=P$. \end{proof}

\begin{proof}[\bf Proof of Theorem \ref{T:main1}] Our starting point is formula \eqref{eqPhiM+}.  Recall that $\om_1$ and $\om_2$ are zero on $\kr Q_\cM^*D_\cM$. From Lemma \ref{lemprojcF} we know that   $D_\cM Q_\cM$ has a closed range. It follows that $\om_1 =\om_1 P_\cF$ and $\om_2=\om_2 P_\cF$, where  $P_\cF$ is  the orthogonal projection of  $H^2_p$ onto $\cF= \im D_\cM Q_\cM$.  Using  the formula for $P_\cF$ given by \eqref{projcF}, the second intertwining relation in \eqref{interRP}, the identities in \eqref{PQR} and  the definition of $\om$ in \eqref {defoma},  \eqref {defoma} we compute
\begin{align*}
 \om_1 D_\cM&= \om_1 P_\cF D_\cM=\om_1 D_\cM Q_\cM D_{\cM}^{\circ -2}Q_\cM^* D_{\cM}^2
 \\
 &=\Pi H  R_\cM D_{\cM}^{\circ -2}R_\cM S^* D_{\cM}^2
= \Pi H  D_{\cM}^{\circ -2}R_\cM S^* D_{\cM}^2
   \\
 &=  \Pi H  D_{\cM}^{\circ -2} S^* R_\cM D_{\cM}^2=\Pi H \La_\cM P_\cM,
\end{align*}
and
\begin{align*}
 \om_2 D_\cM&= \om_2 P_\cF D_\cM= \om_2 D_\cM Q_\cM D_{\cM}^{\circ -2}Q_\cM^* D_{\cM}^2
  \\
 &=  D_\cM R_\cM D_{\cM}^{\circ -2}Q_\cM^* D_{\cM}^2= D_\cM\La_\cM P_\cM.
\end{align*}
Furthermore, using the intertwing relations in \eqref{interRP} and  the second identity in \eqref{PQR} we see that $R_\cM\La_\cM =\La_\cM P_\cM$. In particular, $\La_\cM$ leaves $\cM$ invariant.

Let us now prove that $\spec(\La_\cM )\leq 1$. Note that
\begin{align*}
\spec(\om_2)&= \spec(\om_2P_\cF)= \spec(D_\cM R_\cM D_\cM^{\circ-2}S^*D_\cM)
\\
&=\spec(R_\cM D_\cM^{\circ-2}S^*D_\cM^2)= \spec(R_\cM\La_\cM)= \spec(\La_\cM P_\cM).
\end{align*}
Thus  $ \spec(\La_\cM  P_\cM)\leq 1$, because $\om_2$ is contractive. Since $\La_\cM$ leaves $\cM$ invariant,  we see that relative to the orthogonal decomposition $H^2_p=\cM\oplus  \cM^\perp$ the operator $\La_\cM$ decomposes as
\begin{equation}\label{matLaM1}
 \La_\cM=\begin{bmatrix}P_\cM\La_\cM P_\cM&\star
  \\
 0& (I-P_\cM)\La_\cM(I- P_\cM)
\end{bmatrix}.
\end{equation}
Note that $(I- P_\cM)(I- R_\cM)=(I- P_\cM)$. Using the latter identity,  the formulas  \eqref{defect1} and  \eqref{defect3},
and the intertwining relations in \eqref{interRP}, we obtain
\begin{align*}
(I-P_\cM)\La_\cM(I- P_\cM)&=(I-P_\cM)(I- R_\cM)\La_\cM(I- P_\cM)
\\ &=(I-P_\cM)(I- R_\cM)S^*(I- P_\cM)
 \\
 &=(I-P_\cM)S^*(I- P_\cM).
\end{align*}
Thus $(I-P_\cM)\La_\cM(I- P_\cM)$ is a contraction. Hence  $\spec((I-P_\cM)\La_\cM(I- P_\cM)\leq 1$.  But then \eqref{matLaM1}  shows that $\spec(\La_\cM)\leq 1$.

Next, using that $\La_\cM \cM\subset S^* \cM\subset\cM$ and $\im E= \kr S^*\subset \cM$,
we obtain for each $\la\in\BD$ that
\begin{align*}
\Phi_{\cM, +}(\la)
&=\om_1(I-\la\om_2)^{-1}D_\cM E
 =\om_1 D_\cM (I-\la\La_\cM P_{\cM})^{-1}E
 \\
 &= \Pi H\La_\cM P_{\cM} (I-\la\La_\cM P_{\cM})^{-1}E
 =\Pi H (I-\la\La_\cM P_{\cM})^{-1}\La_\cM P_{\cM}E
 \\
 & =\Pi H (I-\la\La_\cM)^{-1}\La_\cM E,
\end{align*}
which gives formula \eqref{solform}.

Finally, to see that \eqref{solform2} holds, note that $\La_\cM S=I$. Hence $\La_\cM$ is a
left inverse of $S$. Since $E$ is an isometry with $\im E=\kr S^*$, we have
$\La_\cM=S^*+\La_\cM EE^*$. Therefore, for each $\la\in\BD$,
\begin{align*}
\Phi_{\cM, +}(\la)
&=\Pi H (I-\la \La_\cM)^{-1}\La_\cM E
=\Pi H (I-\la S^*-\la \La_\cM EE^*)^{-1}\La_\cM E
 \\
&=\Pi H (I-\la(I-\la S^*)^{-1} \La_\cM EE^*)^{-1}(I-\la S^*)^{-1}\La_\cM E
 \\
&=\Pi H (I-\la S^*)^{-1}\La_\cM E  (I-\la E^*(I-\la S^*)^{-1}\La_\cM E)^{-1}
 \\
&= N(\la)M(\la)^{-1}.
\end{align*}
In particular, $M(\la)$ is invertible.
\end{proof}

{\noindent\textbf{Remark.} From  $R_\cM \La_\cM=\La_\cM P_\cM$ we see that  $\La_\cM$  leaves $\cM$ invariant. Thus, if   $\cM$ in Theorem \ref{T:main1} is finite dimensional,  then   $\Phi_{\cM, +}$ in \eqref{solform} is a rational  function in $H^2_{p\ts q}$,  and hence  $\Phi_{\cM, +}$ is a rational  $p\ts q$ matrix function which has no pole in the closed unit disk.}\\


Next we present  a criterion in terms of maximizing vectors  under which Theorem \ref{T:main1} applies.

\begin{proposition}\label{propDMinv}
Assume $\rank HP_\cM$ is finite.  Then $D_\cM^\circ$ is invertible if and only if
none of  the maximizing vectors of  $HP_\cM$ belongs to $SH^2_p$.
\end{proposition}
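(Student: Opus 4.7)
My plan is to translate invertibility of $D_\cM^\circ$ into a spectral statement about the finite-rank operator $T^*T$, where $T=HP_\cM S$, and then reinterpret that spectral statement in terms of maximizing vectors.

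I would begin by rewriting the definition \eqref{defect2} as $D_\cM^{\circ\,2}=\ga_\cM^2 I - T^*T$ with $T=HP_\cM S$, and observing that this operator is nonnegative and self-adjoint on $H_p^2$, since $\|T\|\leq\|HP_\cM\|=\ga_\cM$. Because $D_\cM^\circ$ is itself nonnegative and self-adjoint, $D_\cM^\circ$ is invertible if and only if $D_\cM^{\circ\,2}$ is. Now the hypothesis $\rank HP_\cM<\infty$ enters: it forces $T$, and hence $T^*T$, to have finite rank, so the spectrum of $T^*T$ consists of $0$ together with finitely many positive eigenvalues. Consequently $\ga_\cM^2 I - T^*T$ is invertible if and only if its kernel is trivial, i.e., if and only if $\ga_\cM^2$ is not an eigenvalue of $T^*T$.

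Next I would translate the eigenvalue condition into the language of maximizing vectors. For $v\in H_p^2$ with $v\neq 0$, the identity $T^*Tv=\ga_\cM^2 v$ is equivalent to $\|Tv\|^2=\ga_\cM^2\|v\|^2$. Indeed, one direction is immediate, and for the converse one writes $\langle (\ga_\cM^2 I - T^*T)v,v\rangle=0$; since $\ga_\cM^2 I - T^*T\geq 0$ has square root $D_\cM^\circ$, this forces $\|D_\cM^\circ v\|^2=0$, hence $T^*Tv=\ga_\cM^2 v$. Using that $S$ is an isometry, $\|Tv\|=\ga_\cM\|v\|$ reads $\|HP_\cM(Sv)\|=\ga_\cM\|Sv\|$, which says precisely that $Sv$ is a maximizing vector of $HP_\cM$ lying in $SH_p^2$. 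Conversely, every maximizing vector of $HP_\cM$ in $SH_p^2$ has the form $Sv$ with $v\neq 0$, and the same equalities give $v\in\kr(\ga_\cM^2 I - T^*T)$.

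Combining the two steps yields the proposition: $D_\cM^\circ$ fails to be invertible if and only if some maximizing vector of $HP_\cM$ belongs to $SH_p^2$. The only genuinely nontrivial input is the first step; the finite-rank hypothesis is used there to rule out the pathology in which $\ga_\cM^2$ is a limit point of the spectrum of $T^*T$ without being an eigenvalue, which is the potential obstacle to the equivalence between invertibility of $\ga_\cM^2 I-T^*T$ and triviality of its kernel.
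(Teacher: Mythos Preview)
Your proof is correct and follows essentially the same route as the paper. Both arguments reduce invertibility of $D_\cM^\circ$ to the strict inequality $\|HP_\cM S\|<\ga_\cM$ (equivalently, $\ga_\cM^2\notin\sigma_p(T^*T)$ for $T=HP_\cM S$), use the finite-rank hypothesis to guarantee that the norm of $HP_\cM S$ is attained, and then translate the attainment condition into the existence of a maximizing vector of $HP_\cM$ inside $SH_p^2$ via the fact that $S$ is an isometry; the only cosmetic difference is that the paper phrases the converse direction by picking a maximizing vector of $HP_\cM S$, whereas you phrase it spectrally.
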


\begin{proof}[\bf Proof]  A vector $h\in H^2_p$ is a maximizing vector of  $HP_\cM$ if and only if $0\not =h\in \cD_\cM^\perp$. Thus we have to show that
 invertibility of $D_\cM^\circ$  is equivalent to  $\cD_\cM^\perp\cap SH^2_p=\{0\}$.

Assume $\cD_\cM^\perp\cap SH^2_p\not=\{0\}$. Thus, using the definition of a maximizing vector,  there exists $Sv$ with $v\not=0$ such that $\|H P_{\cM}Sv\|=\ga_\cM\|Sv\|$.  Since $S$ is an isometry we see that $\|H P_{\cM}Sv\|=\ga_\cM\|v\|$. It follows that   $v$ is in the kernel of $D_\cM^\circ$,  and hence $D_\cM^\circ$ is not invertible.

Conversely, assume that $\cD_\cM^\perp \cap SH^2_p=\{0\}$. Note
that $\rank (H P_\cM S)$ is also finite. Hence $H P_\cM S$ has a maximizing vector,
say $v$. We may assume that $\|v\|=1$. By our assumption the vector $Sv$ is not
a maximizing vector of $H P_\cM$. Hence
\[
\|H P_\cM S\|=\|HP_\cM S \| \| v\|= \|HP_\cM S v\|
<\|HP_\cM\|\|Sv\|=\ga_\cM \|Sv\|=\ga_\cM.
\]
Therefore $D_\cM^{\circ 2}=\ga_\cM^2 I- S^*P_\cM H^* H P_\cM S$ is
positive definite, and thus invertible. Consequently, $D_\cM^{\circ}$ is invertible.
\end{proof}

For later purposes we mention the following. It is straightforward to prove that $D_\cM^\circ$ is invertible if and only if the operator  $\ga_\cM^2 I- HP_\cM SS^*P_\cM H^*$ is invertible, and in that case we have
\begin{align}
& \La_\cM P_\cM H^*= R_\cM H^* V_-^* (\ga_\cM^2 I- HP_\cM SS^*P_\cM H^*)^{-1}\ts\nonumber
\\
 &\hspace{7cm}\ts(\ga_\cM^2 I-HP_\cM H^*),\label{LaM2}
 \\
 & \La_\cM E=-R_\cM H^* V_-^* (\ga_\cM^2 I- HP_\cM SS^*P_\cM H^*)^{-1}HE.\label{LaM3}
\end{align}
These  formulas can be simplified further  using the operators $Z$ and $W$ associated to the Hankel operator $H$ which have been introduced at the end of Section \ref{S:intro}, see \eqref{defZ} and \eqref{defW}. Recall that  $\cX=\overline{\im H}$.  Since  $K_q^2\ominus \cX=\kr H^*$, the space $\cX$ is a reducing subspace for the  operators $\ga_\cM^2 I- HP_\cM SS^*P_\cM H^*$ and $\ga_\cM^2 I-HP_\cM H^*$. Furthermore,
\begin{align}
&\De_\cM:=(\ga_\cM^2 I- HP_\cM SS^*P_\cM H^*)|_\cX= \ga_\cM^2 I_\cX -ZWR_\cM W^*Z^*,\label{defDeM4}
\\
&\Xi_\cM: =(\ga_\cM^2 I-HP_\cM H^*)|_\cX=\ga_\cM^2 I_\cX-WP_\cM W^*. \label{defXiM4}
\end{align}
Note that $\De_\cM$ is invertible if and only if $D_\cM^\circ$ is invertible. Using the above operators, \eqref{LaM2} and \eqref{LaM3} can be written as
\begin{equation}\label{LaM4}
 \La_\cM P_\cM W^*= R_\cM W^*Z^*\De_\cM^{-1}\Xi_\cM,\quad
\La_\cM E=-R_\cM W^*Z^* \De_\cM^{-1}WE.
\end{equation}

\begin{corollary}\label{corNM4}
Let $G\in L^\infty_{q\ts p}$, and let $\cM$ be a subspace of $H^2_p$ satisfying \eqref{cM1}.  Assume the operator $\De_\cM$ defined by \eqref{defDeM4} is invertible. Then  the defect operator $D_\cM^\circ$ defined by \eqref{defect2} is invertible, and the functions $N_\cM$ and $M_\cM$ appearing in \eqref{solform2}
are also given by
\begin{align}
&N_\cM(\la)=N_{\cM,1}(\la)+N_{\cM,2}(\la), \label{SumForm14}\\
&\hspace{.6cm}N_{\cM,1}(\la)=  -\Pi HW^*(I-\la Z^*)^{-1}Z^*\De_\cM^{-1}WE\label{N14}\\
&\hspace{.6cm}N_{\cM,2}(\la)=  \Pi H(I-\la S^*)^{-1}(I-R_\cM)W^*Z^*\De_\cM^{-1}WE.\label{N24}
\end{align}
and
\begin{align}
&M_\cM(\la)=M_{\cM,1}(\la)+M_{\cM,2}(\la),  \label{SumForm24}\\
&\hspace{.6cm}M_{\cM,1}(\la)=I+\la E^*W^*(I-\la Z^*)^{-1}Z^*\De_\cM^{-1}
WE \label{M14}\\
&\hspace{.6cm}M_{\cM,2}(\la)=-\la E^* (I-\la S^*)^{-1}(I-R_\cM)W^*Z^*\De_\cM^{-1}WE. \label{M24}
\end{align}
Furthermore, if $\spec(Z^*\De_\cM^{-1}\Xi_\cM)<1$, then $M_{\cM,1}(\la)$ is invertible for   $|\la|\leq 1$ and
\begin{equation}\label{invM14}
M_{\cM,1}(\la)^{-1}=I-\la E^*W^*(I-\la Z^*\De_\cM^{-1}\Xi_\cM)^{-1}Z^*\De_\cM^{-1}WE, \quad |\la|\leq 1.
\end{equation}
\end{corollary}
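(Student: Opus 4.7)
The plan is to prove the three distinct assertions of the corollary in sequence.

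\emph{First, invertibility of $D_\cM^\circ$.} The remark immediately preceding the corollary states that $D_\cM^\circ$ is invertible if and only if the operator $\ga_\cM^2 I - HP_\cM SS^*P_\cM H^*$ is invertible on $K^2_q$, and that $\cX$ reduces this operator. Since its restriction to $\cX$ is $\De_\cM$ (by \eqref{defDeM4}) and its restriction to $\cX^\perp=\kr H^*$ is the invertible operator $\ga_\cM^2 I$, invertibility of $\De_\cM$ implies that of $D_\cM^\circ$.

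\emph{Second, the decompositions of $N_\cM$ and $M_\cM$.} I will substitute the formula $\La_\cM E=-R_\cM W^*Z^*\De_\cM^{-1}WE$ from \eqref{LaM4} into the expressions in \eqref{solform2} and split $R_\cM=I-(I-R_\cM)$. For the ``$I$'' piece, the identity $S^*W^*=W^*Z^*$, obtained by taking the adjoint of $ZW=WS$, implies $(I-\la S^*)^{-1}W^*=W^*(I-\la Z^*)^{-1}$ for $\la\in\BD$, which yields $N_{\cM,1}$ and $M_{\cM,1}$. The remaining $(I-R_\cM)$ piece is exactly $N_{\cM,2}$ and $M_{\cM,2}$.

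\emph{Third (the main obstacle), the inverse formula for $M_{\cM,1}$.} The crucial algebraic identity is
\[
\Xi_\cM-\De_\cM=-WP_0W^*,\qquad P_0:=EE^*=I-SS^*.
\]
To derive it, I use $ZW=WS$, $W^*Z^*=S^*W^*$, and Lemma \ref{lemPQR} (which gives $SR_\cM S^*=P_\cM SS^*$) to compute $ZWR_\cM W^*Z^*=WP_\cM SS^*W^*=WP_\cM(I-P_0)W^*$; then the hypothesis $\kr S^*\subset\cM$ forces $P_\cM P_0=P_0$, so $ZWR_\cM W^*Z^*=WP_\cM W^*-WP_0W^*$, and the identity follows from \eqref{defDeM4} and \eqref{defXiM4}. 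Now set
\[
X:=W^*(I-\la Z^*)^{-1}Z^*\De_\cM^{-1}W,\qquad Y:=W^*(I-\la Z^*\De_\cM^{-1}\Xi_\cM)^{-1}Z^*\De_\cM^{-1}W,
\]
so that $M_{\cM,1}(\la)=I+\la E^*XE$ and the proposed inverse is $I-\la E^*YE$. Applying the resolvent identity $A^{-1}-B^{-1}=A^{-1}(B-A)B^{-1}$ with $A=I-\la Z^*$ and $B=I-\la Z^*\De_\cM^{-1}\Xi_\cM$, together with $\Xi_\cM-\De_\cM=-WP_0W^*$, gives $X-Y=\la XP_0Y$. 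A short computation then yields
\[
(I+\la E^*XE)(I-\la E^*YE)=I+\la E^*(X-Y)E-\la^2 E^*X(EE^*)YE=I,
\]
using $EE^*=P_0$. Since $M_{\cM,1}(\la)$ takes values in $p\ts p$ matrices, this right inverse is automatically two-sided. Finally, the spectral hypothesis $\spec(Z^*\De_\cM^{-1}\Xi_\cM)<1$ guarantees that $(I-\la Z^*\De_\cM^{-1}\Xi_\cM)$ is invertible for all $|\la|\le 1$, so the formula is valid on the closed unit disk.
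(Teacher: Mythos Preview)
Your proof is correct. Parts 1 and 2 follow the paper's argument essentially verbatim: the reducing-subspace remark for invertibility of $D_\cM^\circ$, and the splitting $R_\cM=I-(I-R_\cM)$ combined with $(I-\la S^*)^{-1}W^*=W^*(I-\la Z^*)^{-1}$ for the decompositions of $N_\cM$ and $M_\cM$.

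In Part 3 you take a somewhat different route from the paper. The paper simply invokes the standard state-space inversion formula (Theorem 2.2.1 of \cite{BGKR}) and then identifies the new state operator as
\[
Z^*-Z^*\De_\cM^{-1}WEE^*W^*=Z^*\De_\cM^{-1}\Xi_\cM,
\]
via the same chain $SR_\cM S^*+EE^*=P_\cM$ that you use. Your approach isolates the equivalent algebraic identity $\De_\cM-\Xi_\cM=WP_0W^*$ first and then verifies the inverse by a direct resolvent computation, avoiding the external citation. The two arguments are algebraically the same at the core (multiply your identity on the left by $Z^*\De_\cM^{-1}$ to recover the paper's state-operator computation), but yours is self-contained while the paper's is shorter for readers who already know the state-space inversion theorem.
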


\begin{proof}[\bf Proof]  For operators $A$ and $B$ the invertibility of $I+AB$ is equivalent to  the invertibility of  $I+BA$.  Using this fact  it is clear that the invertibility of $D_\cM^\circ$  follows form the invertibility of  $\De_\cM$.  Hence we can apply Theorem \ref{T:main1}. Writing $R_\cM$ as $I-(I-R_\cM)$ and using \eqref{LaM4} we see that \eqref{SumForm14} holds with $N_{\cM,2}$ being given by \eqref{N24} and with
\begin{equation}\label{N14a}
 N_{\cM,1}(\la)=  -\Pi H(I-\la S^*)^{-1}W^*Z^*\De_\cM^{-1}WE.
\end{equation}
The intertwining relation  $WS=ZW$ yields  $(I-\la S^*)^{-1}W^*=W^*(I-\la Z^*)^{-1}$. Using the latter identity in  \eqref{N14a} yields  \eqref{N14}.  In a similar way one proves  the identities \eqref{SumForm24}-\eqref{M24}.

To complete the proof  assume  $\spec(Z^*\De_\cM^{-1} \Xi_\cM)<1$. Then the inversion formula for $M_{\cM, 1}(\la)$ follows
from the standard inversion formula from \cite[Theorem~2.2.1]{BGKR}, where
we note that the state operator in the inversion formula equals
\begin{align*}
Z^*-Z^*\De_\cM^{-1} WEE^*W^*
&=Z^*\De_\cM^{-1}(\ga_\cM^2 I-ZWR_\cM W^*Z^*-WEE^*W^*)\\
&=Z^*\De_\cM^{-1}(\ga_\cM^2 I -W(SR_\cM S^* +EE^*)W^* )\\
&=Z^*\De_\cM^{-1}(\ga_\cM^2 I -W(SS^*P_\cM +EE^*P_\cM)W^* )\\
&=Z^*\De_\cM^{-1}(\ga_\cM^2 I -WP_\cM W^* )=Z^*\De_\cM^{-1}\Xi_\cM ,
\end{align*}
as claimed. Here we used the second identity in \eqref{PQR}, and the fact that $P_\cM E=E$, because  $\im E=\kr S^*\subset \cM$.\end{proof}

\setcounter{equation}{0}
\section{The special case where $\cM=H^2_p$}\label{S:NehariCase}

Throughout this section $\cM=H^2_p$, that is, we are dealing with the $H^2_p$-restricted Nehari problem, which is just  the usual Nehari problem.   Since  $\cM=H^2_p$, we will surpress the index $\cM$ in our notation, and just write $D$, $D^\circ$, $\cD$, $\cD^\circ$, $\La$, etc.\ instead of $D_\cM$, $D_\cM^\circ$, $\cD_\cM$, $\cD_\cM^\circ$, $\La_\cM$, etc. In particular,
\begin{equation}\label{defect4}
\ga =\|H\|, \quad D=(\ga^2 I- H^*H )^{1/2}, \quad D ^\circ=(\ga ^2 I-S^*  H^*H  S)^{1/2}.
\end{equation}
We shall assume (cf., the first paragraph of Section \ref{S:MainRes}) that the following two conditions are satisfied
\begin{itemize}
 \item[(C1)] $H$ has finite rank,
 \item[(C2)] none of  the maximizing vectors of $H$ belongs $SH_p^2$, and  the space spanned by the maximizing vectors of $H$ has
 dimension $p$.
 \end{itemize}
Note that the space spanned by the maximizing vectors of $H$ is equal to $\kr D=\cD^\perp$, where $\cD$ is the closure of the range of $D$.  As $H^2_p=\kr S^*\oplus SH^2$,  we see that
\begin{equation}\label{equiC2}
\mbox{(C2)} \quad \Longleftrightarrow\quad  H^2_p=\kr D\dot{+}SH^2_p \quad \Longleftrightarrow\quad H^2_p=\kr S^* \dot{+}\cD.
\end{equation}
Here $ \dot{+}$ means direct sum, not necessarily orthogonal direct sum.

Let  $Z$ and $W$ be the operators defined by \eqref{defZ} and \eqref{defW}, respectively, and
\begin{equation}\label{DeXi}
 \De= \ga^2 I_\cX-ZWW^*Z^*, \quad \Xi= \ga^2 I_\cX- WW^*.
\end{equation}
We shall prove the following theorem.

\begin{theorem}\label{T:NehariCase}
Let $G\in L^2_{q\ts p}$, and assume that the Hankel operator $H$ associated with the co-analytic part of $G$ satisfies   conditions \textup{(C1)}  and \textup{(C2)}. Then  the operator $\De$ defined by the first identity in  \eqref{DeXi} is invertible and the Nehari problem associated with $G$ has a unique optimal solution ${F}\in H^\iy_{q\ts p} $. Moreover, this unique solution is given by   ${F}=G_+ -{\Phi}_+$, where  $G_+$ is the analytic part of $G$ and ${\Phi}_+$ is the rational $q\ts p$ matrix-valued $H^\iy$ function given by
\begin{align*}
&{\Phi}_+(\la)=N(\la)M(\la)^{-1},  \mbox{  where}
\\
&\hspace{.5cm}N(\la)=-\Pi H W^*(I_\cX-\la Z^*)^{-1}Z^*\De^{-1}WE,
\\
&\hspace{.5cm}M(\la)=I+\la E^*W^*(I_\cX-\la Z^*)^{-1}Z^*\De^{-1}WE,
\end{align*}
Furthermore, $r_{spec}(Z^*\De^{-1}\Xi)<1$, and the inverse of $M(\la)$ is given by
\[
M(\la)^{-1}=I-\la E^*W^*(I_\cX-\la Z^*\De^{-1}\Xi)^{-1}Z^*\De^{-1}WE.
\]
Here $\Xi$ is the operator  defined by the second identity in  \eqref{DeXi}.
\end{theorem}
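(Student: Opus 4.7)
The plan is to derive Theorem \ref{T:NehariCase} as the specialization of Theorem \ref{T:main1} and Corollary \ref{corNM4} to the case $\cM=H^2_p$, checking each hypothesis in terms of (C1) and (C2) and then handling the additional assertions about uniqueness and about the spectral radius of $Z^*\De^{-1}\Xi$. With $\cM=H^2_p$ one has $P_\cM=I$, and since $S$ is an isometry also $R_\cM=S^*S=I$, $Q_\cM=S$, so the factor $I-R_\cM$ in the summands $N_{\cM,2}$, $M_{\cM,2}$ of Corollary \ref{corNM4} is zero. Hence $N_\cM=N_{\cM,1}$ and $M_\cM=M_{\cM,1}$ reduce to the expressions for $N$ and $M$ in the theorem, while $\ga_\cM=\ga$, $\De_\cM=\De$, $\Xi_\cM=\Xi$.

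\noindent\textbf{Invertibility of $\De$ and the central solution.}
By (C1) the operator $HP_\cM=H$ has finite rank, and the first half of (C2) says that no maximizing vector of $H$ lies in $SH^2_p$; Proposition \ref{propDMinv} then gives $D^\circ$ invertible, whence (by the paragraph preceding Corollary \ref{corNM4}) $\De$ is invertible. Applying Theorem \ref{T:main1} and Corollary \ref{corNM4} yields the central optimal solution $F=G-\Phi$ with $\Phi_-=G_-$ and $\Phi_+=NM^{-1}$ as stated, and with $M(\la)$ invertible on $\BD$.

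\noindent\textbf{Uniqueness.}
Here I invoke the remark after Proposition \ref{propforopt1}: it suffices to prove $\cF=\overline{\im D_\cM Q_\cM}=\overline{\im DS}$ equals $\cD=\overline{\im D}$. The second form of (C2) in \eqref{equiC2} says $H^2_p=\kr D\,\dot+\,SH^2_p$, so any $v\in H^2_p$ is $v=k+Sy$ with $k\in\kr D$, $y\in H^2_p$, and hence $Dv=DSy\in\im DS$. This gives $\im D\subseteq \im DS$, and the opposite inclusion is trivial, so $\cF=\cD$. Therefore the central optimal solution is the unique optimal solution, and writing $F=G-\Phi=G_+-\Phi_+$ (since $\Phi_-=G_-$) yields the stated formula.

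\noindent\textbf{Spectral radius of $Z^*\De^{-1}\Xi$.}
This is the main obstacle. Specializing the first identity in \eqref{LaM4} to $\cM=H^2_p$ (so $R_\cM=P_\cM=I$) gives the intertwining
\[
\La W^*=W^*Z^*\De^{-1}\Xi.
\]
As $\cX=\overline{\im H}$, the operator $W^*:\cX\to H^2_p$ is injective; since $\cX$ is finite dimensional, every eigenvalue $\mu$ of $Z^*\De^{-1}\Xi$ with eigenvector $x\ne 0$ lifts, via $W^*x\ne 0$, to an eigenvalue of $\La$. Combined with $\spec(\La)\leq 1$ from Theorem \ref{T:main1}, this yields $\spec(Z^*\De^{-1}\Xi)\leq 1$. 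To promote this to the strict inequality, assume for contradiction that $|\mu|=1$ and set $v=W^*x$. Unwinding $\La v=\mu v$ gives $S^*D^2(v-\mu Sv)=0$, so $w:=v-\mu Sv$ satisfies $D^2w\in\kr S^*=\im E$. The crucial remaining step, and the true technical heart of the argument, is to show that in fact $w\in\kr D$: this will be extracted by combining the inclusion $D^2w\in\im E$ with the identity $E^*w=E^*v$ (which uses $E^*S=0$) and with the dichotomy $H^2_p=\kr D\,\dot+\,SH^2_p$ from (C2), plus the finite-dimensionality of $\im W^*$. Once $w\in\kr D$ is established one has $v=\mu Sv+k$ with $k\in\kr D$; iterating,
\[
v=\mu^nS^nv+\sum_{j=0}^{n-1}\mu^jS^jk\qquad (n\in\BN).
\]
Since $S$ is an isometry and $|\mu|=1$, the left side has constant norm $\|v\|$ while $\|\mu^nS^nv\|=\|v\|$; if $k\ne 0$, Parseval applied to the Fourier expansion of $k$ shows that the sum has norm growing at least like $\sqrt{n}$ times the norm of the lowest nonzero Fourier coefficient of $k$, contradicting boundedness. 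Hence $k=0$, so $v=\mu Sv$; but $\mu^{-1}\in\overline{\BD}$ is not an eigenvalue of $S$ on $H^2_p$, so $v=0$, contradicting $v\ne 0$. Therefore $\spec(Z^*\De^{-1}\Xi)<1$, and the inversion formula for $M(\la)^{-1}$ from Corollary \ref{corNM4} is valid for all $|\la|\leq 1$, completing the proof.
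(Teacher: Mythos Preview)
Your reduction to Theorem \ref{T:main1} and Corollary \ref{corNM4} is correct, and your uniqueness argument is in fact cleaner than the paper's: from $H^2_p=\kr D\,\dot+\,SH^2_p$ you get $\im D\subset \im DS$ directly, whereas the paper computes the Fredholm index of $DS$ to reach the same conclusion (Lemma \ref{L:F=D}(i)). The ``crucial remaining step'' $w\in\kr D$ that you left unproven is also easy: write $w=w_0+Sw_1$ with $w_0\in\kr D$ via the direct sum; then $D^2w\in\kr S^*$ gives $0=S^*D^2Sw_1=D^{\circ 2}w_1$, and invertibility of $D^\circ$ forces $w_1=0$.

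The real problem is the step after that. Your claim that $\big\|\sum_{j=0}^{n-1}\mu^jS^jk\big\|$ grows like $\sqrt n$ whenever $k\ne 0$ is false, and the argument is in fact circular: by construction $k=(I-\mu S)v$, so the partial sum telescopes,
\[
\sum_{j=0}^{n-1}\mu^jS^jk=\sum_{j=0}^{n-1}\bigl(\mu^jS^jv-\mu^{j+1}S^{j+1}v\bigr)=v-\mu^nS^nv,
\]
which has norm at most $2\|v\|$ for every $n$. (For a concrete counterexample in $H^2$ take $\mu=1$, $k(z)=1-z$.) So nothing in your iteration forces $k=0$, and the contradiction never materializes. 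Knowing only that $\La v=\mu v$ with $|\mu|=1$ and $v-\mu Sv\in\kr D$ does not by itself yield $v=0$; one needs more.

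The paper supplies that extra ingredient by proving that $\La$ is \emph{strongly stable} (Lemma \ref{L:F=D}(iii)): first $\kr D$ is shown to be cyclic for $S$ (Lemma \ref{L:F=D}(ii)), then $\om_2^nDS^n=D$ together with cyclicity gives $\om_2^n\to 0$ strongly, and $\La^{n+1}=D^{\circ-2}S^*D\,\om_2^n$ transfers this to $\La$. From the intertwining $\La W^*=W^*T$ with $T=Z^*\De^{-1}\Xi$ and the finite-dimensionality of $\cX$ one then gets $T^n\to 0$, hence $\spec(T)<1$. Your eigenvalue-on-the-circle approach bypasses exactly this cyclicity/strong-stability machinery, and that is the missing idea.
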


{The fact that condition (C2) implies uniqueness of the optimal solution follows from \cite{AAK71}; cf.,  Theorem 7.5 (2) in \cite{AD08}.  It will} be convenient first to prove the following lemma.

\begin{lemma}\label{L:F=D}
Assume  $H$ is compact and \textup{(C2)} is satisfied. Then the following holds.
\begin{itemize}
 \item[(i)] The operator $D^\circ$ is invertible, and the range of $DS$ is closed and is equal to $\cD$.  In particular, the optimal  solution to the Nehari problem is unique.
 \item[(ii)] The  subspace $\kr D=\cD^\perp$ of $H^2_p$ is cyclic for $S$.
 \item[(iii)] The operators  $\om_2=DD^{\circ -2}S^*D$ and $\La=D^{\circ -2}S^*D$   are well-defined and  strongly stable.
 \end{itemize}
 \end{lemma}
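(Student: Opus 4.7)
My plan for (i) is direct. The invertibility of $D^\circ$ is immediate from Proposition \ref{propDMinv} applied with $\cM = H^2_p$: its hypothesis, that no maximizing vector of $H$ lies in $SH^2_p$, is precisely the first half of (C2). The identity $D^{\circ 2} = S^*D^2 S$ gives $\|DSx\| = \|D^\circ x\|$ for all $x$, so $DS$ is bounded below and has closed range. For the range equality, (C2) yields $H^2_p = \kr D \,\dot{+}\, SH^2_p$, so $D H^2_p = D(SH^2_p)$; taking closures and using that $\ran(DS)$ is already closed gives $\cD = \ran(DS)$. Uniqueness of the optimal Nehari solution then follows from the remark after Proposition \ref{propforopt1}: with $\cM = H^2_p$ we have $R_\cM = I$ and $Q_\cM = S$, so $\cF = \overline{\ran(DQ_\cM)} = \ran(DS) = \cD = \cD_\cM$, and that remark guarantees the central optimal solution is the unique one.

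For (ii), I would iterate the direct-sum decomposition from (C2): applying $S$ repeatedly gives $H^2_p = \sum_{k=0}^{n-1} S^k \kr D \,\dot{+}\, S^n H^2_p$ for each $n$. Let $\cN = \bigvee_{k \geq 0} S^k \kr D$. By Beurling--Lax, $\cN = \Theta H^2_\ell$ for some inner $\Theta \in H^\iy_{p \times \ell}$, forcing $\ell \leq p$; conversely, since $\kr D \cap SH^2_p = \{0\}$ the natural map $\kr D \to \cN/S\cN$ is injective, yielding $\ell \geq \dim \kr D = p$. Thus $\ell = p$, $\Theta$ is square, and $\cN^\perp = K_\Theta$ is the model space of a $p \times p$ inner function. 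To conclude $K_\Theta = \{0\}$ I would invoke (C1): $(\kr H)^\perp$ is finite-dimensional, and via the bijection $W^* \colon \cX \to (\kr H)^\perp$ combined with $S^* W^* = W^* Z^*$ and $\spec(Z) < 1$, the restriction $S^*|_{(\kr H)^\perp}$ has spectral radius strictly less than $1$. Since $K_\Theta \subseteq \cD$ is $S^*$-invariant, any eigenvector of $S^*|_{K_\Theta}$ at $\la \in \BD$ is a reproducing kernel $k_\la u$ satisfying $V(\la)^* u = 0$, where $V(\la)$ has a basis of $\kr D$ evaluated at $\la$ as columns; the invertibility of $V(0)$ from (C2) combined with a holomorphy argument rules out any such nonzero $u$, giving $K_\Theta = \{0\}$.

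For (iii), well-definedness of $\om_2$ and $\La$ is immediate once $D^\circ$ is invertible by (i). For strong stability I would exploit the intertwining $\om_2 D = D\wtilL$ derived in the proof of Theorem \ref{T:main1}, where $\wtilL := D^{\circ -2}S^* D^2$ satisfies $\wtilL S = I$ and $\wtilL|_{\kr D} = 0$; iterating gives $\om_2^n D = D\wtilL^n$. Because $H^*H$ has finitely many eigenvalues, all of which on $\cD$ are strictly less than $\ga^2$, the operator $D$ is bounded below on $\cD$, so strong stability of $\om_2|_\cD$ reduces to that of $\wtilL$. Using $\wtilL S = I$ together with $\wtilL|_{\kr D} = 0$, the cyclicity from (ii) expresses every $f \in H^2_p$ as a convergent expansion along $\overline{\bigoplus_k S^k \kr D}$, and $\wtilL^n$ strips off the first $n$ layers of this expansion, yielding vanishing tails. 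The corresponding conclusion for $\La = D^{\circ -2}S^*D$ follows from its close relationship with $\om_2 = D\La$ on $\cD$ and the same spectral arguments.

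The principal obstacle is the final step of (ii): ruling out a nontrivial $S^*$-invariant model space $K_\Theta$ inside $\cD$ using (C1) together with the bijectivity of $v \mapsto v(0)$ on $\kr D$ from (C2). The strong stability argument in (iii) depends essentially on this cyclicity, so this spectral/reproducing-kernel step is the central technical input of the lemma; once it is in hand, the remaining assertions reduce to routine consequences of the framework set up in Sections \ref{secRestrN} and \ref{S:centropt2}.
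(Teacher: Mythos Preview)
Your argument for (i) is correct and in fact more direct than the paper's Fredholm-index computation, but note a recurring hypothesis mismatch: Proposition~\ref{propDMinv} as stated assumes $\rank H<\infty$, whereas the present lemma only assumes $H$ compact. The proof of that proposition goes through verbatim for compact $H$ (a compact operator attains its norm), but you should say so rather than cite it blindly. The same slip recurs in (iii), where you claim $H^*H$ has ``finitely many eigenvalues''; under compactness there may be infinitely many, though they still stay strictly below $\ga^2$ on $\cD$, which is all you need.

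The genuine gap is in (ii). After correctly reducing to $K_\Theta=\{0\}$ for a square inner $\Theta$, your eigenvector argument fails on two counts. First, invertibility of $V(0)$ does not propagate to all of $\BD$ by any ``holomorphy argument'': $\det V$ is a nonconstant holomorphic function, nonzero at the origin, and there is nothing preventing zeros elsewhere in $\BD$. Second, even if you could rule out every reproducing-kernel eigenvector, the absence of point spectrum for $S^*|_{K_\Theta}$ does \emph{not} force $K_\Theta=\{0\}$: take $\Theta$ a singular inner function and $K_\Theta$ is nontrivial with no $S^*$-eigenvectors. Your aside about $(\kr H)^\perp$ being finite-dimensional under (C1) does not help either, since you never establish $K_\Theta\subset(\kr H)^\perp$; the inclusion $K_\Theta\subset\cD$ goes the wrong way for that.

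The paper's route avoids Beurling--Lax entirely and works under compactness alone. From $\om_2$ contractive and $\om_2 D=D\La$ (your $\wtilL$ is the paper's $\La$; the lemma statement has a typo) one gets $\sup_k\|\La^k\|<\infty$. Now if $h\perp\bigvee_k S^k\cD^\perp$, then $S^{*k}h\in\cD=\im D$ for every $k$; writing $S^{*k}h=Dh_k$ and using the projection identity $P_\cD=DSD^{\circ-2}S^*D$ from Lemma~\ref{lemprojcF}, a two-line computation yields $S^{*k}h=\La^*S^{*k+1}h$, hence $h=\La^{*k}S^{*k}h$. Since $S^{*k}h\to0$ and $\|\La^{*k}\|$ is uniformly bounded, $h=0$. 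Once (ii) is secured this way, your plan for (iii) does work, provided you make the uniform bound $\sup_n\|\La^n\|<\infty$ explicit: $\La^n$ annihilates each finite sum $\sum_{k<n}S^k\kr D$, and density plus uniform boundedness gives strong stability.
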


\begin{proof}[\bf Proof]  We split the proof  into three parts according to the three items.

\smallskip\noindent\textsf{Part 1.}  We prove (i).  Since $H$ is compact, the selfadjoint operator $D$ has closed range and a finite dimensional null space. Thus $D$ is a Fredholm operator of index zero.  See \cite[Section XI.1]{GGK90} for the definitions of these notions.   Note  $S$ is a Fredholm operator of index $p$.  Thus $DS$ is also a Fredholm operator. In particular, the range of $DS$ is closed, and hence $\cF: =\ov{D SH^2_p}= D SH^2_p$. Moreover,
\[
\tu{ind}(DS)=\tu{ind}(D)+\tu{ind}(S)= -p.
\]
Here $\tu{ind}$ denotes the index of a Fredholm operator, and we used the fact (\cite[Theorem XI.3.2.]{GGK90}) that the index of a product of two Fredholm operators is the sum of the indices of the factors. On the other hand, since $\kr D\cap SH^2_p$ consists of the zero vector only,  we see that $\kr DS=\{0\}$, and hence, using the definition of the index,  we have $p= \codim DS H^2_p$. But
$DS H^2_p\subset D H^2_p=\cD$ and, by the third part of \eqref{equiC2}, we have $\codim \cD=p$  Thus $\cF=\cD$. The latter implies that the central solution of \eqref{rcl3}  is the only optimal solution of the  Nehari problem; see the remark made at the end of Section \ref{secRestrN}.

Finally, $\kr DS=\{0\}$ and $DS$ has closed range, yields $D^{\circ 2}=S^*D^2S$ is invertible.  This completes the proof of (i).

\smallskip\noindent\textsf{Part 2.}  We prove (ii).
We begin with a remark. From (i) we know that that $D^{\circ}$ is invertible. Thus the operators  $\om_2=DD^{\circ -2}S^*D$ and $\La=D^{\circ -2}S^*D^2$ are well defined. Clearly, $\om_2D=D\La$, and hence $\om_2^k D=D\La^k$ for $k=0, 1,2 , \ldots$. It follows that
\begin{equation}\label{om2La}
\La^{k+1}=D^{\circ -2}S^*D^2 \La^k=D^{\circ -2}S^*D\om_2^k , \quad k=0, 1,2 , \ldots.
\end{equation}
Since $\om_2$ is a contraction, we conclude that  $\sup_{k\geq 0} \|\La^{k}\|<\iy$.

Our aim is to prove that  $H^2_p=\bigvee_{k=0}^\iy S^k \cD^\perp$.  Take  $h\in H^2_p$ perpendicular to $\bigvee_{k=0}^\iy S^k \cD^\perp$. The latter is equivalent to  $S^{*k}h$ being  perpendicular to $\cD^\perp$ for $k=0,1,2,\ldots$, that is, $S^{*k}h\in \cD$ for $k=0,1,2,\ldots$. Recall that the range of $D$ is closed, because $H$ is compact. Thus for each $k=0,1,2,\ldots$ the vector $S^{*k}h=D h_k$ for some $h_k\in \cD$.  Thus $S^{*k+1}h=S^*Dh_k$. Since $D^{\circ}$ is invertible,  Lemma \ref{lemprojcF} specified for the case $\cM=H^2_p$ tells us that $P:=
DSD^{\circ -2}S^*D$ is the orthogonal projection of $H^2_p$ onto $\cF=\cD=\im D$. Thus for $k=0,1,2,\ldots$ we have
\[
S^{*k}h=Dh_k=DPh_k=D^2S D^{\circ -2}S^*D h_k= D^2S D^{\circ -2} S^{*k+1}h=\La^*S^{*k+1}h,
\]
and by induction $h=\La^{*k}S^{*k}h$.  Since $\lim_{k\to 0} \|S^{*k+1}h\|=0$, and $\sup_{k\geq 0} \|\La^{k}\|<\iy$, it follows that $\|h\|=0$. Hence $h=0$,  and
we can conclude that $\bigvee_{k=0}^\iy S^k \cD^\perp=H^2_p$.  This proves (ii).

\smallskip\noindent\textsf{Part 3.}  We prove (iii).  We already know that $\om_2$ and $\La$ are well defined. We first prove that $\om_2$ is strongly stable, that is, $\lim_{k\to\iy}\om_2^k v=0$ for any $v\in H^2_p$. Note that
$\om_2 DS=D$. Hence $\om_2^k D S^k=D$ for $k=0,1,2,\ldots$. Since $\cD^\perp=\kr D$,
we have for any nonnegative integers $k,l$ that
$\om_2^{k+l} D S^k \cD^\perp=\om_2^l S\cD^\perp=0$. In other words, the kernel of $\om_2^k$
includes $\cX_k:=\bigvee_{\nu=0}^k S^\nu\cD^\perp$.
Let $v\in H^2_p$. According  to (ii), we have
$\bigvee_{\nu=0}^\infty S^\nu\cD^\perp=H^2_p$. Thus $P_{\cY_k}v\to 0$, with $\cY_k=H^2_p\ominus \cX_k$,
and since $\om_2$ is contractive, we find that
\[
\|\om_2^k v\|=\|\om_2^kP_{\cY_k}v\|\leq \|P_{\cY_k}v\|\to0.
\]
Thus $\om_2$ is strongly stable, as claimed,  {and the fact that $\La$ is strongly stable follows immediately from \eqref{om2La}.} \end{proof}


\begin{proof}[\bf Proof of Theorem \ref{T:NehariCase}]
From  Lemma \ref{L:F=D} (i) we know that $D^\circ$ is invertible,
and the   optimal solution is unique. Since the invertibility of  $D^\circ$ implies the invertibility of $\De$,  we can apply Theorem \ref{T:main1} and Corollary \ref{corNM4} with $\cM=H^2_p$ to get the desired formula for $\Phi_+$. Note that $R_{H^2_p}=I$, and hence in this case the functions appearing in \eqref{N24} and \eqref{M24} are identically zero.

Put $T=Z^*\De\Xi$.  Next we show that $\spec(T)<1$. By specifying the first identity in \eqref{LaM4} we see that   $\La W^*=W^* T$, and thus $\La^k W^*=W^* T^k$ for each $k\in \BN$. Since $\La$ is strongly stable (by Lemma \ref{L:F=D} (iii)), we arrive at $\lim_{k\to \iy} W^*T^k x=0$. The fact that $H$ has finite rank, implies that the range of $H$ is closed, and hence $W$ is surjective. But then $(WW^*)^{-1}W$ is a left inverse of $W^*$, and $T^kx=(WW^*)^{-1}WT^kx\to 0$ if $k\to \iy$. Thus $T$ is strongly stable. Since the underlying space $\cX$ is finite dimensional, we conclude  that $\spec(Z^*\De\Xi)=\spec(T)<1$.

Finally, since $\spec(Z^*\De\Xi)=\spec(T)<1$,  the invertibility of $M(\la)$ for $\|\la\|$ and the formula for its inverse   follow by specifying the final part of Corollary \ref{corNM4} for the case when $\cM=H^2_p$.
\end{proof}

\setcounter{equation}{0}
\section{Convergence of central optimal  solutions}\label{secprfmthm}

Throughout  $G\in L^\iy_{q\ts p}$ and  $H$ is  the Hankel operator
defined by the co-analytic part of $G$.  We assume that conditions (C1) and (C2)
formulated in the first paragraph of Section \ref{S:MainRes} are satisfied. Furthermore, $\cM_0$ is a finite dimensional $S^*$-invariant subspace of $H^2_p$, and $\cM_0, \cM_1, \cM_2, \ldots$  is a sequence of subspaces of $H^2_p$ defined recursively by \eqref{cMk}. We set  $P_k=P_{\cM_k}$.  {From the remarks made in the paragraph preceding Theorem \ref{T:main2} one sees that
\begin{equation}\label{ProForm}
{I-P_k=S^k(I-P_0)S^{*k},\quad S^*P_{k}=P_{k-1} S^*\quad P_{k}E=E.}\quad (k\in \BN).
\end{equation}
Here $E$ is the embedding operator defined by  \eqref{defE}.}

In this section we will proof Theorem \ref{T:main2}. In fact we will
show that with an appropriate choice of the initial space $\cM_0$ convergence
occurs at an ever faster rate than stated in Theorem \ref{T:main2}.
We start with a lemma that will be of help when proving the increased
rate of convergence.

\begin{lemma}\label{L:SpecBound}
Let $Z$ and $W$ be the operators defined by \eqref{defZ} and \eqref{defW}, respectively, and put  $\cX_0=W\cM_0^\perp \subset\cX$.  Then $\cX_0$ is $Z$-invariant of $\cX=\im W$, and $\spec(Z_0)\leq\spec(Z)$. Furthermore,  let the operators $Z_0: \cX_0\to  \cX_0$ and  $W_0: H^2_p\to  \cX_0$ be defined by $Z_0=Z|_{\cX_0}$ and $W_0=\Pi_{\cX_0}W$, where  $\Pi_{\cX_0}$ is the orthogonal projection of $\cX$ onto $\cX_0$. Then
\begin{equation}\label{ZtoZ0}
Z^kW(I-P_0)=\Pi_{\cX_0}^*Z_0^k W_0(I-P_0),\quad k=0,1,2,\ldots.
\end{equation}
\end{lemma}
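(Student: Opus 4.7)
The plan is to prove the lemma in three short moves corresponding to the three claims: invariance of $\cX_0$ under $Z$, the spectral radius bound, and the operator identity \eqref{ZtoZ0}.

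First, I would establish $Z$-invariance of $\cX_0$. The key ingredient is the intertwining relation $V_-H=HS$ recorded at the end of Section \ref{S:intro}, which, in terms of the restricted operators, reads $ZW=WS$. Since $\cM_0$ is $S^*$-invariant by hypothesis, its orthogonal complement $\cM_0^\perp$ is $S$-invariant, and therefore
\[
Z\cX_0 = ZW\cM_0^\perp = WS\cM_0^\perp \subset W\cM_0^\perp = \cX_0.
\]
Thus $Z_0=Z|_{\cX_0}$ is a well-defined bounded operator on $\cX_0$.

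Next, the spectral radius inequality is immediate: since $Z_0^k=Z^k|_{\cX_0}$ for each $k\in\BN$, we have $\|Z_0^k\|\leq\|Z^k\|$, so Gelfand's formula yields
\[
\spec(Z_0)=\lim_{k\to\infty}\|Z_0^k\|^{1/k}\leq \lim_{k\to\infty}\|Z^k\|^{1/k}=\spec(Z).
\]

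Finally, for the identity \eqref{ZtoZ0}, I would argue by induction on $k$. For $k=0$, note that $W(I-P_0)$ maps $H_p^2$ into $W\cM_0^\perp=\cX_0$, so the orthogonal projection onto $\cX_0$ acts as the identity on its range, giving $W(I-P_0)=\Pi_{\cX_0}^*\Pi_{\cX_0}W(I-P_0)=\Pi_{\cX_0}^*W_0(I-P_0)$, where $\Pi_{\cX_0}^*:\cX_0\to\cX$ is the canonical inclusion. For the inductive step, one uses the elementary identity $Z\Pi_{\cX_0}^*=\Pi_{\cX_0}^*Z_0$ (valid because $Z$ agrees with $Z_0$ on vectors of $\cX_0$) to pass $Z$ through $\Pi_{\cX_0}^*$:
\[
Z^{k+1}W(I-P_0)=Z\bigl(\Pi_{\cX_0}^*Z_0^kW_0(I-P_0)\bigr)=\Pi_{\cX_0}^*Z_0^{k+1}W_0(I-P_0).
\]

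There is no real obstacle here; the only mild subtlety is bookkeeping with the inclusion/projection pair $(\Pi_{\cX_0}^*,\Pi_{\cX_0})$ and remembering that $W_0=\Pi_{\cX_0}W$ coincides with $W$ on $\cM_0^\perp$, so that the composition makes sense as an operator into $\cX_0$.
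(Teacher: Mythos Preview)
Your proof is correct and follows essentially the same approach as the paper: both use $ZW=WS$ together with the $S$-invariance of $\cM_0^\perp$ to obtain the $Z$-invariance of $\cX_0$, and both derive \eqref{ZtoZ0} from the identities $Z\Pi_{\cX_0}^*=\Pi_{\cX_0}^*Z_0$ and $\Pi_{\cX_0}^*W_0(I-P_0)=W(I-P_0)$. The only cosmetic difference is that you spell out the Gelfand-formula argument for the spectral radius bound and phrase the final identity as an induction, whereas the paper states both steps in a single line.
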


\begin{proof}[\bf Proof]
Since $ZW=WS$ and $\cM_0^\perp$ is invariant under $S$, we see that $\cX_0$ is invariant under $Z$,  and thus  $\spec(Z_0)\leq\spec(Z)$. From the definition of $Z_0$ and $W_0$ we see that $Z\Pi_{\cX_0}^*=\Pi_{\cX_0}^*Z_0$ and $\Pi_{\cX_0}^*W_0(I-P_0)=W(I-P_0)$. Thus
\[
Z^kW(I-P_0)=Z^k \Pi_{\cX_0}^*W_0(I-P_0) =\Pi_{\cX_0}^* Z_0^k W_0(I-P_0), \quad k=0,1,2,\ldots
\]
This proves \eqref{ZtoZ0}.\end{proof}

Assume $0<r<1$ such that the poles of $G$ inside $\BD$ are in the
open disc $\BD_r$. As mentioned in the introduction, the poles of
$G$ inside $\BD$ coincide with the eigenvalues of $Z$. Thus $\spec(Z)<r$.
By Lemma \ref{L:SpecBound}, $\spec(Z_0)\leq\spec(Z)<r$.  {In what follows we fix $0<r_0<1$  such that $\spec(Z_0)<r_0<r$. We will show that the convergence of the central optimal solutions $F_k$ in Theorem \ref{T:main2} is proportional to~$r_0^{k}$. }

{For simplicity, we will adapt}  the notation of Section
\ref{S:NehariCase}, and write $\ga$, $\De$, $N$ and $M$ instead of
$\ga_{H^2_p}$, $\De_{H^2_p}$, $N_{H^2_p}$ and $M_{H^2_p}$.
Futhermore, we  use the abbreviated notation $P_k$, $\ga_k$,
$\La_k$, $\Xi_k$, and $\De_k$ for the operators $P_{\cM_k}$, $\ga_{\cM_k}$,
$\La_{\cM_k}$, $\Xi_{\cM_k}$, and $\De_{\cM_k}$ appearing in Section \ref{S:centropt2} for $\cM=\cM_k$.

As a first step towards the proof of our convergence result  we prove the following lemma.

\begin{lemma}\label{L:DeltaInv}
{Assume conditions \textup{(C1)} and \textup{(C2)}  are satisfied. Then $\De_k\to_{r_0^2}\De$, and for $k\in\BN$ large enough  $\De_k$ is invertible, and $\De_k^{-1}\to_{r_0^2}\De^{-1}$.}
\end{lemma}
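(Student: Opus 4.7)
The plan is to split $\De-\De_k$ into a scalar perturbation and an operator perturbation, bound each by $O(r_0^{2k})$ using Lemma \ref{L:SpecBound}, and then transfer the rate to the inverses via a resolvent identity. The key algebraic reduction is that $R_{\cM_k}=P_{k-1}$: by Lemma \ref{lemPQR} and the identity $I-P_k=S^k(I-P_0)S^{*k}$ from \eqref{ProForm},
\[
R_{\cM_k}=S^*P_kS=I-S^{k-1}(I-P_0)S^{*(k-1)}=P_{k-1}.
\]
Combined with the intertwining $WS=ZW$ this gives
\[
\De-\De_k=(\ga^2-\ga_k^2)I_\cX-Z^kW(I-P_0)W^*Z^{*k}.
\]

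The operator piece is straightforward to control: applying Lemma \ref{L:SpecBound} and its adjoint yields $Z^kW(I-P_0)W^*Z^{*k}=\Pi_{\cX_0}^*Z_0^kW_0(I-P_0)W_0^*Z_0^{*k}\Pi_{\cX_0}$, and since $\cX_0$ is finite dimensional with $\spec(Z_0)<r_0$, Gelfand's formula gives $\|Z_0^k\|=O(r_0^k)$, hence this term has norm $O(r_0^{2k})$.

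The main obstacle is showing that $|\ga^2-\ga_k^2|=O(r_0^{2k})$ rather than the naive $O(r_0^k)$. One direction, $\ga_k\leq\ga$, is trivial since $P_k\leq I$. For the matching lower bound I would test against $P_kv$, where $v$ is a maximizing vector of $H$ (provided by (C2)). Since $v\in\kr D$ one has $H^*Hv=\ga^2 v$, so in particular $v\in\im W^*$; write $v=W^*u$ with $u\in\cX$. Expanding $\|HP_kv\|^2$ and using the eigenrelation causes the cross term to collapse, leaving
\[
\|HP_kv\|^2=\ga^2\|P_kv\|^2-\ga^2\|(I-P_k)v\|^2+\|H(I-P_k)v\|^2.
\]
Now $(I-P_k)v=S^k(I-P_0)S^{*k}W^*u=S^k(I-P_0)W^*Z^{*k}u$ (using $S^*W^*=W^*Z^*$), and by the adjoint of Lemma \ref{L:SpecBound} this equals $S^k(I-P_0)W_0^*Z_0^{*k}\Pi_{\cX_0}u$, whose norm is $O(r_0^k)$. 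Dividing by $\|P_kv\|^2\to\|v\|^2>0$ gives $\ga_k^2\geq\ga^2-O(r_0^{2k})$.

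Combining the two bounds yields $\|\De-\De_k\|=O(r_0^{2k})$, i.e.\ $\De_k\to_{r_0^2}\De$. Since $\De$ is invertible by Theorem \ref{T:NehariCase}, a Neumann series argument produces a uniform bound on $\|\De_k^{-1}\|$ for $k$ large, and the resolvent identity $\De_k^{-1}-\De^{-1}=\De^{-1}(\De-\De_k)\De_k^{-1}$ then yields $\|\De_k^{-1}-\De^{-1}\|=O(r_0^{2k})$, completing the claim.
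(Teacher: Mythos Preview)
Your proof is correct and follows the same overall architecture as the paper: establish $R_{\cM_k}=P_{k-1}$, split $\De-\De_k$ into the scalar piece $(\ga^2-\ga_k^2)I_\cX$ and the operator piece $Z^kW(I-P_0)W^*Z^{*k}$, bound each by $O(r_0^{2k})$ via Lemma~\ref{L:SpecBound}, and then pass to inverses using invertibility of $\De$ from Theorem~\ref{T:NehariCase}.

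The one substantive difference is how you handle the scalar piece, which you flag as ``the main obstacle.'' Your maximizing-vector test is valid (and the representation $v=W^*u$ is justified since $v=\ga^{-2}H^*Hv=\ga^{-2}W^*(Hv)$), but the paper disposes of this step more directly: since $\ga_k^2=\|HP_k\|^2=\|HP_kH^*\|$ and $\ga^2=\|HH^*\|$, the reverse triangle inequality for the operator norm gives
\[
|\ga^2-\ga_k^2|\leq\|HH^*-HP_kH^*\|=\|H(I-P_k)H^*\|=\|Z_0^kW_0(I-P_0)W_0^*Z_0^{*k}\|\leq\|H\|^2\|Z_0^k\|^2,
\]
yielding the $O(r_0^{2k})$ bound in one line without invoking (C2) or choosing a test vector. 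Your argument buys nothing extra here, so this is a place where the paper's route is simply cleaner.
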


\begin{proof}[\bf Proof]
{We begin with a few remarks. Recall that  for $\cM$ in \eqref{cM1}  the operator $R_{\cM}$ is defined to be the orthogonal projection of $H^2_p$ onto $S^*\cM$; see the first paragraph of Section \ref{secRestrN}.  For   $\cM=\cM_k$  we have $S^*\cM_k=\cM_{k-1}$ by  \eqref{cMk}, and thus $\cM=\cM_k$ implies  $R_{\cM_{k}}=P_{k-1}$. It follows  that the operator $\De_k$ is given by $\De_k=\ga_k^2 I_\cX- ZWP_{k-1}W^*Z^*$; c.f., the second part of \eqref{defDeM4}. From  the invertibility of $D_{\cM_k}^\circ$ we obtain that $\De_k$ is invertible as well; see the first paragraph of the proof Corollary \ref{corNM4}. The identities in \eqref{LaM4} for $\cM=\cM_k$ now take the form}
\begin{equation}\label{rels}
\La_k P_k W^*=P_{k-1}W^*Z^*\De_k^{-1}\Xi_k,\quad
\La_kE=-P_{k-1}W^*Z^*\De_k^{-1}WE.
\end{equation}

Observe that
$
\ga_k^2=\|HP_k\|^2=\|P_k H^*\|^2=\spec(HP_k H^*)=\|HP_k H^*\|.
$
By a similar computation $\ga^2=\|HH^*\|$. Thus, using
\eqref{ProForm} and \eqref{ZtoZ0},
\begin{align*}
|\ga^2-\ga_k^2|
&=|\|HH^*\|-\|HP_k H^*\||
\leq \|HH^*-HP_k H^*\|\\
& =\|HS^k (I-P_0) S^{*k}H^*\|
=\|Z_0^k W_0(I-P_0)W_0^* Z_0^{*k}\|\\
&\leq \|Z_0^k\|\,\| H\|\,\|(I-P_0)\|\,\|H^*\| \|Z_0^{*k}\|
=\| H\|^2\,\|Z_0^k\|^2.
\end{align*}
It follows that $\ga_k^2\to_{r_0^2}\ga^2$.
Next, again by \eqref{ProForm} and \eqref{ZtoZ0}, we obtain
\begin{align*}
\De_k
&=\ga_k^2 I-ZWP_{k-1}W^*Z^*\\
&=\ga_k^2 I-ZWW^*Z^*+ZWS^{k-1}(I-P_0)S^{*k-1}W^*Z^*\\
&=\De +(\ga_k^2 -\ga^2)I+P_{\cX_0} Z_0^{k-1}W_0(I-P_0)W_0Z_0^{*k}P_{\cX_0}.
\end{align*}
Clearly the second and third summand converge to zero proportional
to $r_0^{2k}$, and thus we may conclude that $\De_k\to_{r_0^2}\De$.

{Since $\De$ is invertible by Theorem \ref{T:NehariCase}. The result of the previous paragraph  implies that for $k$ large enough $\De_k$ is invertible and $\|\De_k^{-1}\|<L$ for some $L>0$ independent of $k$. Consequently $\De_k^{-1}\to_{r_0^2}\De^{-1}$.}
\end{proof}

\begin{proof}[\bf Proof of Theorem \ref{T:main2} (with $r_0^k$-convergence)]
We split the proof into  {four} parts. Throughout  $k\in \BN$ is assumed to be large enough so that $\De_k$ is invertible; see Lemma~\ref{L:DeltaInv}.\smallskip

\noindent\textit{Part 1.} Let $N$ and $M$ be as in Theorem \ref{T:NehariCase}. Put
\begin{align}
 N_{k,1}(\la)&=  -\Pi HW^*(I-\la Z^*)^{-1}Z^*\De_k^{-1}WE,\label{defNk1}\\
M_{k,1}(\la)&= I+\la E^*W^*(I-\la Z^*)^{-1}Z^*\De_k^{-1} WE.\label{defMk1}
\end{align}
Since the only dependence on $k$ in $N_{k,1}$ and $M_{k,1}$ occurs in the form of $\De_k$, it follows from Lemma \ref{L:DeltaInv}  that
\begin{equation}\label{convNMk1}
M_{k,1}\to_{r_0^2}M\ands N_{k,1}\to_{r_0^2}N.
\end{equation}

\noindent\textit{Part 2.} From Corollary \ref{corNM4} we know that
\begin{align}
 N_k(\la)&=N_{k,1}(\la)+N_{k,2}(\la), \quad N_{k,2}(\la)=\Pi H\Ga_k(\la)Z^*\De_k^{-1}WE,\label{defNk2}\\
 M_k(\la)&=M_{k,1}(\la)+M_{k,2}(\la), \quad M_{k,2}(\la)=-\la E^*\Ga_k(\la)Z^*\De_k^{-1}WE.\label{defMk2}
\end{align}
Here $\Ga_k(\la)=(I-\la S^*)^{-1}(I-P_{k-1})W^*$.  In this part we show that $M_{k,2}\to_{r_0} 0$.

Using the first identity in \eqref{ProForm}, the intertwining relation $ZW=WS$, and \eqref{ZtoZ0} we see that
\begin{align*}
\Ga_k(\la)&=(I-\la S^*)^{-1}S^{k-1}(I-P_{0})S^{*k-1}W^*\\
&=(I-\la S^*)^{-1}S^{k-1}(I-P_{0}) W_0^*Z_0^{* k-1}\Pi_{\cX_0}.
\end{align*}
Next we use that
\[
(I-\la S^*)^{-1}S^{k-1}=\sum_{j=0}^{k-2}\la^j S^{k-1-j} +\la^{k-1}(I-\la S^*)^{-1}.
\]
Thus $\Ga_k(\la)=\Ga_{k, 1}(\la)+\Ga_{k, 2}(\la)$, where
\begin{align*}
 \Ga_{k, 1}(\la)&= \Big(\sum_{j=0}^{k-2}\la^j S^{k-1-j}\Big)(I-P_{0}) W_0^*Z_0^{* k-1}\Pi_{\cX_0},\\
\Ga_{k, 2}(\la)&=\la^{k-1}(I-\la S^*)^{-1} (I-P_{0}) W_0^*Z_0^{* k-1}\Pi_{\cX_0}.
\end{align*}

 Now recall that $\cM_0$ is  $S^*$-invariant, and write $S_0=P_0 S P_0=P_0 S$. The fact that $\cM_0$ is finite dimensional implies $\spec(S_0)<1$. The computation
\begin{align*}
(I-\la S^*)^{-1}(I-P_0)W_0^*
&=(I-\la S^*)^{-1}W_0^*-(I-\la S^*)^{-1}P_0W_0^*\\
&=W_0^*(I-\la Z_0^*)^{-1}-(I-\la S_0^*)^{-1}P_0 W_0^*,
\end{align*}
shows that $(I-\la S^*)^{-1}(I-P_0)W_0^*$ is uniformly bounded on $\BD$. Since $\spec(Z_0)<r_0<1$, we conclude that $\Ga_{k,2}\to_{r_0} 0$.

Next observe that $E^* \big(\sum_{j=0}^{k-2}\la^j S^{k-1-j}\big)=0$, and thus
$E^*\Ga_{k,1}(\la)=0$ for each $k\in \BN$. We conclude that

\[
M_{k,2}(\la)= -\la E^*\Ga_{k,2}(\la)Z^*\De_k^{-1}WE.
\]
But then $\Ga_{k,2}\to_{r_0} 0$ implies that  the same holds true for  $M_{k,2}$, that is,  $M_{k,2}\to_{r_0} 0$. Indeed, this follows from the above identity and  the fact that the sequence $\De_k^{-1}$ is uniformly bounded. \smallskip

\noindent\textit{Part 3.} In this part we show that $N_{k,2}\to_{r_0} 0$. To do this we first observe that
\[
\Pi H S^{k-1-j}=\Pi  V_-^{k-1-j} H=\Pi  V_-^{k-1-j}P_\cX W=\Pi P_\cX Z^{k-1-j} W.
\]
Post-multiplying this identity with $I-P_0$ and using \eqref{ZtoZ0} yields
\[
\Pi H S^{k-1-j}(I-P_0)=\Pi P_{\cX_0}Z_0^{k-1-j}W_0(I-P_0).
\]
It follows that
\begin{align}
 N_{k,2}(\la)&=  \Big(\sum_{j=0}^{k-2}\la^j\Pi P_{\cX_0}Z_0^{k-1-j} \Big)W_0(I-P_0)W_0^* Z_0^{* k-1}+\nonumber\\
 &\hspace{4.5cm}+ \Pi H\Ga_{k, 2} (\la)Z^*\De_k^{-1}WE.\label{indNk2}
\end{align}
From the previous part of the proof  we know that $\Ga_{k,2}\to_{r_0} 0$, and by  Lemma \ref{L:DeltaInv}  the sequence  $\De_k^{-1}$ is uniformly bounded. It follows that the second term in the right hand side of \eqref{indNk2} converges to zero with a rate proportional to  $r_0^k$. Note that for $\la \in \BD$ we have
\[
\|\sum_{j=0}^{k-2}\la^j\Pi P_{\cX_0}Z_0^{k-1-j}\|\leq \sum_{j=0}^{k-2}  \|Z_0\|^{k-1-j} \leq\sum_{j=1}^\infty \|Z_0^j\|\leq \frac{L_0r_0}{1-r_0}.
\]
 Since $\spec(Z_0)<r_0<1$, we also have $\|Z_0^{* k-1}\| \to_{r_0} 0$. It follows that  the first term in the right hand side of  \eqref{indNk2} converges to zero with a rate proportional to  $r_0^k$.  We conclude that $N_{k,2}\to_{r_0} 0$.  \smallskip

\noindent\textit{Part 4.} To complete the proof, it remains to show that
$M_{k}^{-1}(\la)\to_{r_0}M^{-1}(\la)$ uniformly on $\ov{\BD}$. By similar
computations as in the proof of Lemma \ref{L:DeltaInv}, it follows that
$\Xi_k\to_{r_0^2}\Xi$. Hence $Z^*\De_k^{-1}\Xi_k\to_{r_0^2}Z^*\De^{-1}\Xi$.
By Theorem \ref{T:NehariCase} we have
$\spec(Z^*\De^{-1}\Xi)<1$. Thus for $k$ large enough also $\spec(Z^*\De_k^{-1}\Xi_k)<1$,
and $M_{k,1}(\la)$ is invertible on $\ov{\BD}$. From the fact that $M_{k,1}\to_{r_0^2} M$,
we see that $M_{k,1}^{-1}\to_{r_0^2} M^{-1}$, with $M_{k,1}^{-1}$ and $M^{-1}$ indicating here the functions on $\ov{\BD}$ with values $M_{k,1}(\la)^{-1}$ and $M(\la)^{-1}$ for each $\la\in \ov{\BD}$. In particular, the functions $M_{k,1}^{-1}$ are uniformly bounded on $\ov{\BD}$ by a constant independent of $k$, which implies
\[
I+M_{k,1}^{-1}M_{k,2}\to_{r_0} I,\quad
(I+M_{k,1}^{-1}M_{k,2})^{-1}\to_{r_0} I.
\]
As a consequence
\begin{align*}
M_k^{-1}
&=(M_{k,1}+M_{k,2})^{-1}=(I+M_{k,1}^{-1}M_{k,2})^{-1}M_{k,1}^{-1}
\to_{r_0} I\cdot  M^{-1}=M^{-1},
\end{align*}
which completes the proof.
\end{proof}

\noindent\textbf{Concluding remarks} \\Note  that the functions $M_{k,1}$ and $N_{k,1}$ given by  \eqref{defNk1} and \eqref{defMk1}  converge with a rate proportional to $r_0^{2k}$ rather than $r_0^k$; cf., \eqref{convNMk1}.  Consequently the same holds true for $M_{k,1}^{-1}$. Thus a much faster convergence may be achieved when $N_{k,1}M_{k,1}^{-1}$ are used instead of $N_kM_k^{-1}$. However, for the inverse of $M_{k,1}$ to exist on $\ov{\BD}$ we need $k$ to be large enough to guarantee $\spec(Z^*\De_k\Xi_k)<1$, and  it is at present not clear how large $k$ should be.\smallskip

\noindent {For the scalar case condition (C2) is rather natural. Indeed (see the second paragraph of Section \ref{S:MainRes}) for the scalar case condition (C2) is equivalent to the requirement that the largest singular value of the Hankel operator is simple.  The latter condition also appears in model reduction problems. In the matrix-valued case (C2) seems  rather special. We expect that a version of Theorem \ref{T:main2} can be proved by only using the first part of (C2), that is, by assuming that none of the maximizing vectors of the Hankel operator  belongs to $SH^2_p$; cf., Proposition \ref{propDMinv}.  However, note  that in that case the optimal solution of the Nehari problem may not be unique. }
\smallskip

{ Computational examples show that it may happen that the approximations of the optimal solution to the Nehari problem considered in this paper oscillate to the optimal solution when the initial space $\cM_0=\{0\}$.  Although the rate of convergence can be improved considerably by choosing a different initial space $\cM_0$, the same examples  show that the approximations  still oscillate  in much the same way as before to the optimal solution.  This suggests that approximating  the optimal solution may not be practical in some problems. In this case,  one may have to adjust these approximating optimal solutions.  We plan to return to this phenomenon in a later paper.} \smallskip

\noindent\textbf{Acknowledgement.} {The authors thank Joe Ball for mentioning the Helton-Young paper \cite{HY90} to  the second author.}


\end{document}